\newcommand{\mo}{\operatorname{mo}}
\newcommand{\inter}{\operatorname{int}}
\newcommand{\img}{\operatorname{im}}
\newcommand{\esol}{\operatorname{eSol}}
\def\articletheorems{
\newtheorem{thm}{Theorem}[section]
\newtheorem{lem}[thm]{Lemma}
\newtheorem{conj}[thm]{Conjecture}
\newtheorem{defn}[thm]{Definition}
\newtheorem{cor}[thm]{Corollary}
\newtheorem{prop}[thm]{Proposition} 
\newtheorem{propdef}[thm]{Proposition and Definition}
\newtheorem{ex}[thm]{Example}
\newtheorem{algo}{Algorithm}[section] % For algorithms in tables
\newtheorem{alg}[thm]{Algorithm}  % For algorithms (MM)
\newtheorem{rem}[thm]{Remark}
}
\def\refeq#1{\if\workingver y(\ref{#1})-[[#1]]\else(\ref{#1})\fi}
\def\refth#1{\if\workingver y\ref{#1}-[[#1]]\else\ref{#1}\fi}
\def\mylabel#1{\if\workingver y\label{#1}{\bf\ \ [[#1]]\ \ }\else\label{#1}\fi}
\def\mybibitem#1{\if\workingver y\bibitem{#1}{\bf\ \ [[#1]]\ \
}\else\bibitem{#1}\fi}
\def\articletheorems{
\newtheorem{thm}{Theorem}[section]
\newtheorem{lem}[thm]{Lemma}
\newtheorem{conj}[thm]{Conjecture}

\newtheorem{prop}[thm]{Proposition} 

\newtheorem{ex}[thm]{Example}
 % For algorithms in tables
  % For algorithms (MM)

}
\renewcommand{\emptyset}{\varnothing}
\renewcommand{\rho}{\varrho}
\renewcommand{\epsilon}{\varepsilon}
\def\cA{\text{$\mathcal A$}}
\def\cT{\text{$\mathcal T$}}
\def\cV{\text{$\mathcal V$}}
\newcommand{\cl}{\operatorname{cl}}
\newcommand{\bd}{\operatorname{bd}}
\newcommand{\dom}{\operatorname{dom}}
\renewcommand{\emptyset}{\varnothing}
\newcommand{\Inv}{\operatorname{Inv}}
\def\mathobj#1{\mbox{$#1$}}
\def\ZZ{\mathobj{\mathbb{Z}}}
\renewenvironment{proof}{{\bf Proof:\ }}{\qedsymbol}% To modify or to add to the ctd_defs
\begin{document} 

\title{From Data to Combinatorial Multivector field Through an Optimization-Based Framework}
\author[1]{Dominic Desjardins Côté}
\author[2]{Donald Woukeng}
\affil[1]{Département de Mathématiques, Université de Sherbrooke, Sherbrooke, QC, Canada, dominic.desjardins.cote@usherbrooke.ca}
\affil[2]{Division of Computational Mathematics, Faculty of Mathematics and Computer
Science, Jagiellonian University, ul. St. Lojasiewicza 6, Krakow, 30-348, Poland, donald.woukeng@aims.ac.rw}
\date{}
\maketitle
%%================================%%
%% Sample for structured abstract %%
%%================================%%

%%%%%%%%%%%%%%%%%%%%%%%%%%%%%%%%%%%%%%%%%%%%%%%%%%%%%%%%%%%%%%%%%
%%%%%%%%%%%%%%%%%%%%%%%%%%%%%%%%%%%%%%%%%%%%%%%%%%%%%%%%%%%%%%%%%

\begin{abstract}
    This paper extends and generalizes previous works on constructing combinatorial multivector fields from continuous systems (see \cite{Woukeng_2024}) and the construction of combinatorial vector fields from data (see \cite{arDDC_FiniteVecField}) by introducing an optimization based framework for the construction of combinatorial multivector fields from finite vector field data.  We address key challenges in convexity, computational complexity and resolution, providing theoretical guarantees and practical methodologies for generating combinatorial representation of the dynamics of our data.
\end{abstract}

\section{Introduction}
\subsection{Background and motivation}
Differential equations are essential mathematical tools for modelling dynamical behaviors in biology, physic, science and engineering in general. While they may provide a continuous framework, most of the time their solutions are very hard to obtain analytically, nearly impossible and they require numerical method to approximate solutions with finite constraints. Capturing a global understanding of the system remains very hard.

Combinatorial dynamical systems, introduced as a finite counterpart to continuous systems have emerged as a promising approach for the understanding of the global behavior of a continuous system in a selected region. Following the works of  \cite{forman1998combinatorial} and \cite{lipinski2019conley}, the theory of combinatorial multivector fields was born as a tool to study global dynamics of a continuous system by creating from it a combinatorial mutivector field, that induces itself a combinatorial dynamical system. Combinatorial multivector fields not only preserve most of the key dynamical properties but also permit automated analysis of complex behaviors \cite{Woukeng_2024} with the use of graph theory and through topological invariants such as the Conley index.

 That being said, most of the time, when we want to study some complex systems, we collect data. It happens rarely that these data can be fitted into a differential equation, sometimes we try to fit them into some high order polynomials but then, there is an error generated from that, before studying the differential equation. With our approach, we want be able to say something about the dynamics just from the data. We present two models for multivector field construction using an optimization-based framework and explore their implications for dynamical systems analysis from the data.
 
\subsection{Overview of main results}\label{ss:OverviewMain}

            For the overview, we consider only the optimization problem (\ref{eq:MinOptMv}). To setup the optimization problem, we suppose that we have a simplicial complex $ K $, and a map $ V : K \to \mathbb{R}^n$. Let $ z(\sigma, \tau) \in \mathbb{Z}_2 $ be our variables such that $ \sigma < \tau \in K $ where $ \tau $ is a toplex. If $ z(\sigma, \tau) =1 $, then $ \sigma $ and $ \tau $ are in the same multivector. We assign a cost $ c_i $ to each variables which depends of $ \sigma $, $ \tau$, and the value of $ V(\sigma) $. If $ \sigma $ and $ \tau $ should be in the same multivector, then the cost of $ z(\sigma, \tau) $ is low. We obtain the following optimization problem :

            \begin{equation}    \label{eq:Intro}
                \begin{aligned}
            		& \underset{z(\sigma, \tau) \in \{ 0, 1 \} }{\text{minimize}}
            		& & f(\vec{z}) = \vec{c} \cdot \vec{z} \\
            		& \text{subject to}
                    & & \sum_{\sigma < \tau_i} z(\sigma, \tau_i) = 1 \qquad \text{for } \sigma \in K \setminus T \\
                    & & & z(\sigma_i, \tau) - z(\sigma_j, \tau) \leq 0 \, \text{for } \sigma_i <\sigma_j < \tau.
        		\end{aligned}
            \end{equation}
            The first set of constraints ensure that each simplex is in a single multivector. The second set set of constraints guarantee the convexity property of combinatorial multivector field.  We apply this method to a simple problem. Consider the following dynamical system :
            \begin{equation}\label{eq:dynSysRepOrb}
                \begin{cases}
                    \frac{dx}{dt} = y + x(x^2 + y^2 - 1) \\
                    \frac{dy}{dt} = -x +y(x^2 + y^2 - 1)
                \end{cases}.  
            \end{equation}
            This system has an attractive fixed points and a repulsive orbit. We choose a set of $ 1000 $ data points randomly in $ [-3 ,3] \times [-3, ,3] $. We apply a k-means approach to reduce the number of data points to $ 100 $ clusters. We build a Delaunay complex on the cluster. For the map $ V : K \to \mathbb{R}^2 $, we compute the $0$-simplices with (\ref{eq:dynSysRepOrb}). For a $d$-simplex $\sigma $, $ V(\sigma) $ is the average of $V(\rho_i)$ where $ \rho_i$ is $0$-simplex of $ \sigma $ for all $i$. From the up-left figure of (\ref{fig:ExModel2}), we see the optimal solution of (\ref{eq:MinOptMvMatrix}). If there is an arrow from the barycenter of $ \sigma $ to the barycenter of $ \tau $, then $ z(\sigma, \tau) = 1 $. Otherwise, $ z(\sigma, \tau) = 0 $. From the up-right figure of (\ref{fig:ExModel2}), we obtain the combinatorial multivector field from (\ref{eq:Model2MultVec}). We obtain a single critical multivector $ V $ with the number $47$. This multivector $ V $ represent well the dynamics of an attractive fixed point. The down-left figure of (\ref{fig:ExModel2}), we have the only strongly connected component $S$. We have that the exit set of $ S $ is the border of an annulus, and it represent well the dynamics of a repulsive orbit. The down-right figure of (\ref{fig:ExModel2}), we have the gradient component of the combinatorial multivector field.
            
            %\textbf{Todo:}\textit{ Maybe I am wrong but isn't the fixed point also a part of the reccurent component of our multivector field?}
            % Answer : No, from the fixed multivector, we cannot come back to the recurrent component

%%%%%%%%%%%%%%%%%%%%%%%%%%%%%%%%%%%%%%%%%%%%%%%%
         \begin{figure}
  	         \center
             \includegraphics[height=4cm, width=6cm, scale=1.00, angle=0 ]{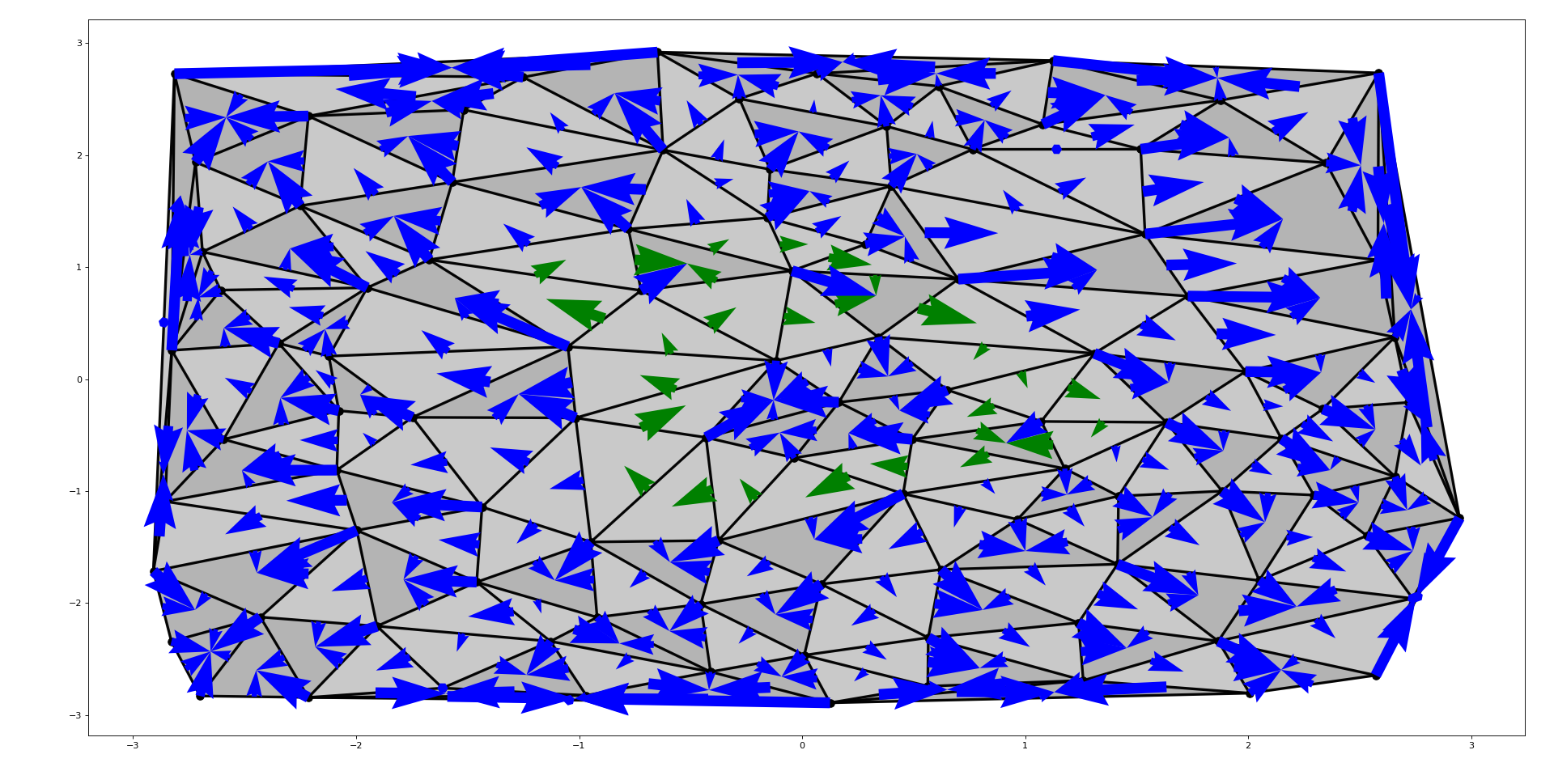}
             \includegraphics[height=4cm, width=6cm, scale=1.00, angle=0 ]{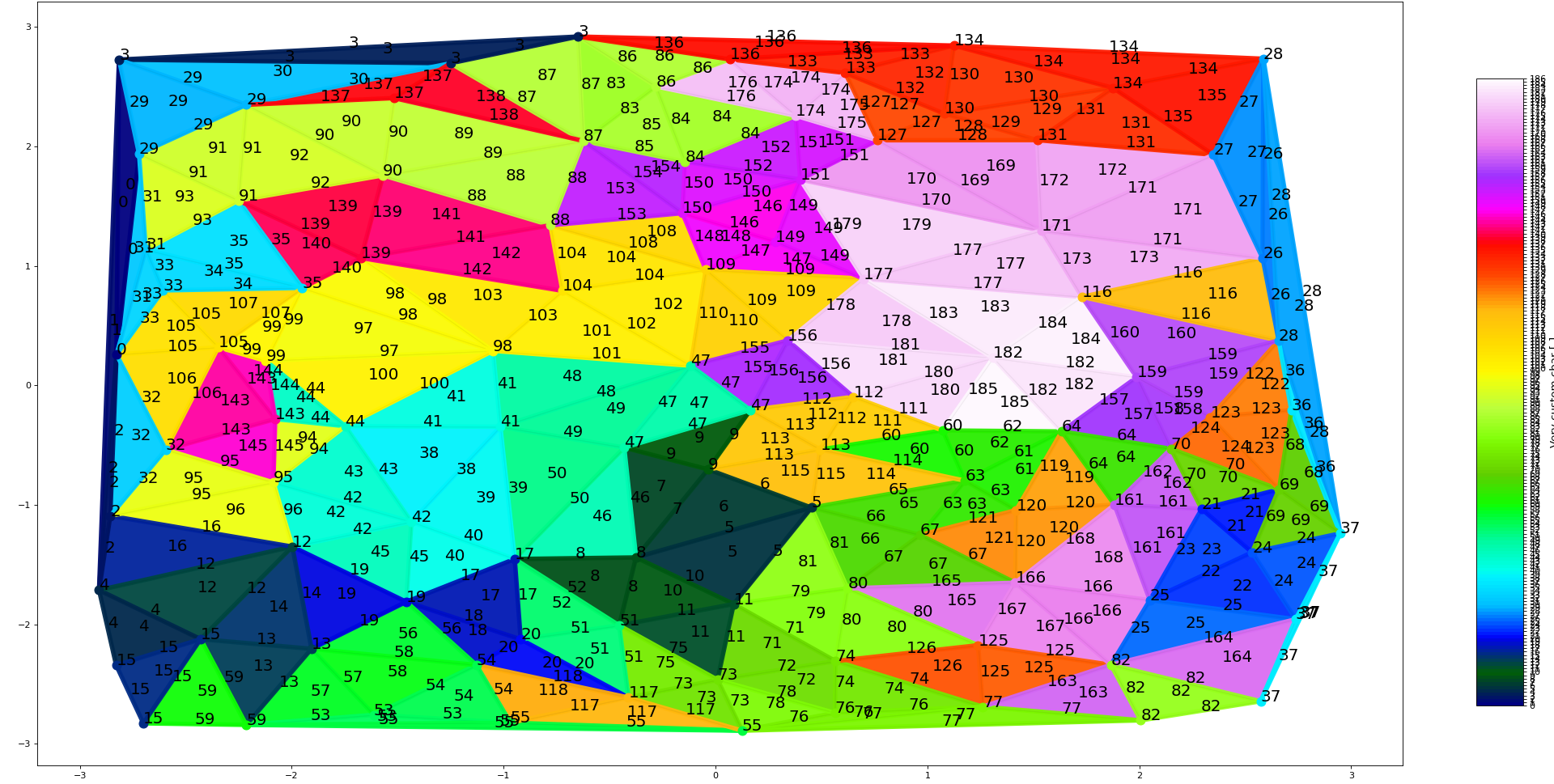}
             \includegraphics[height=4cm, width=6cm, scale=1.00, angle=0 ]{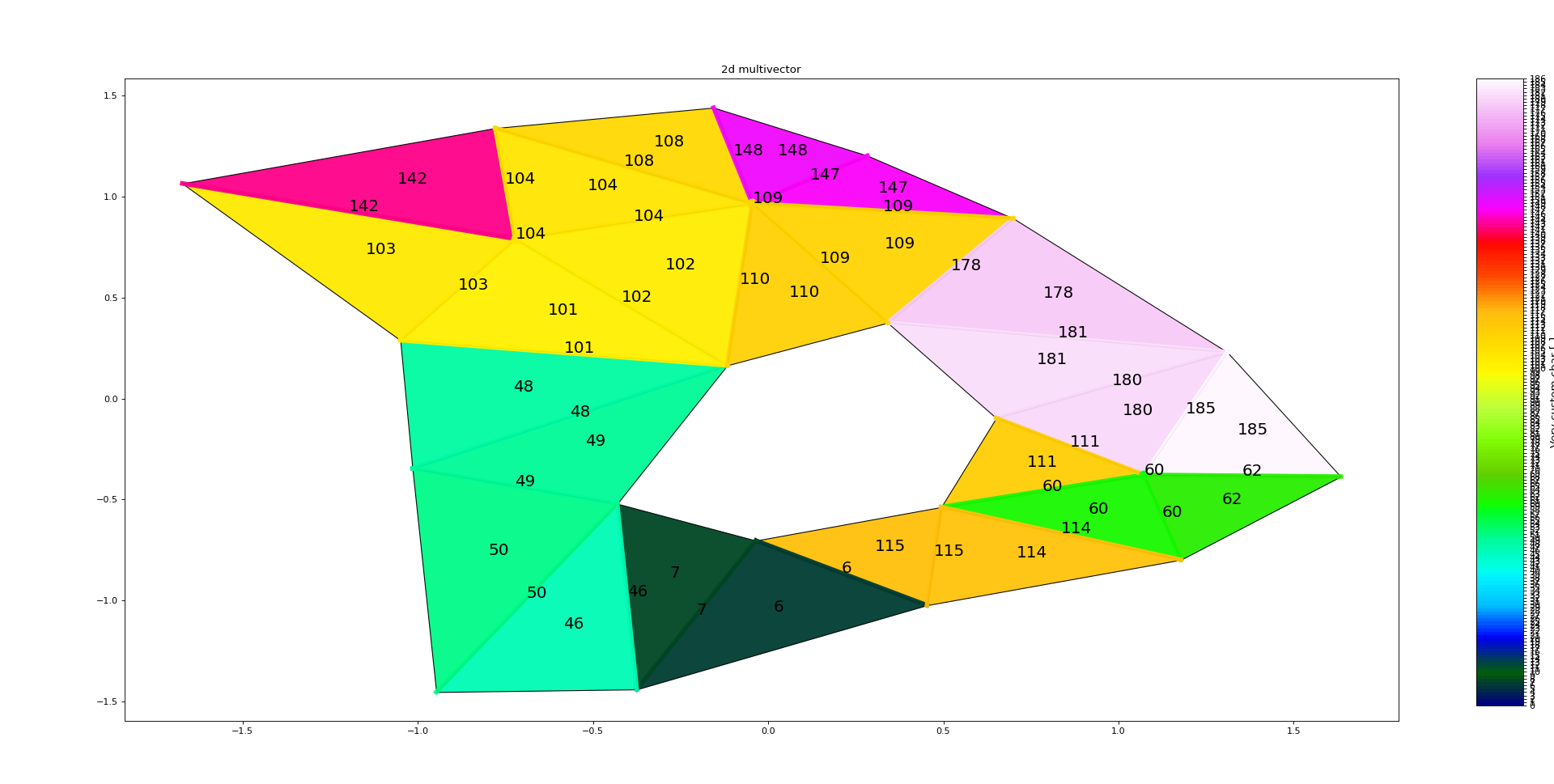}
             \includegraphics[height=4cm, width=6cm, scale=1.00, angle=0 ]{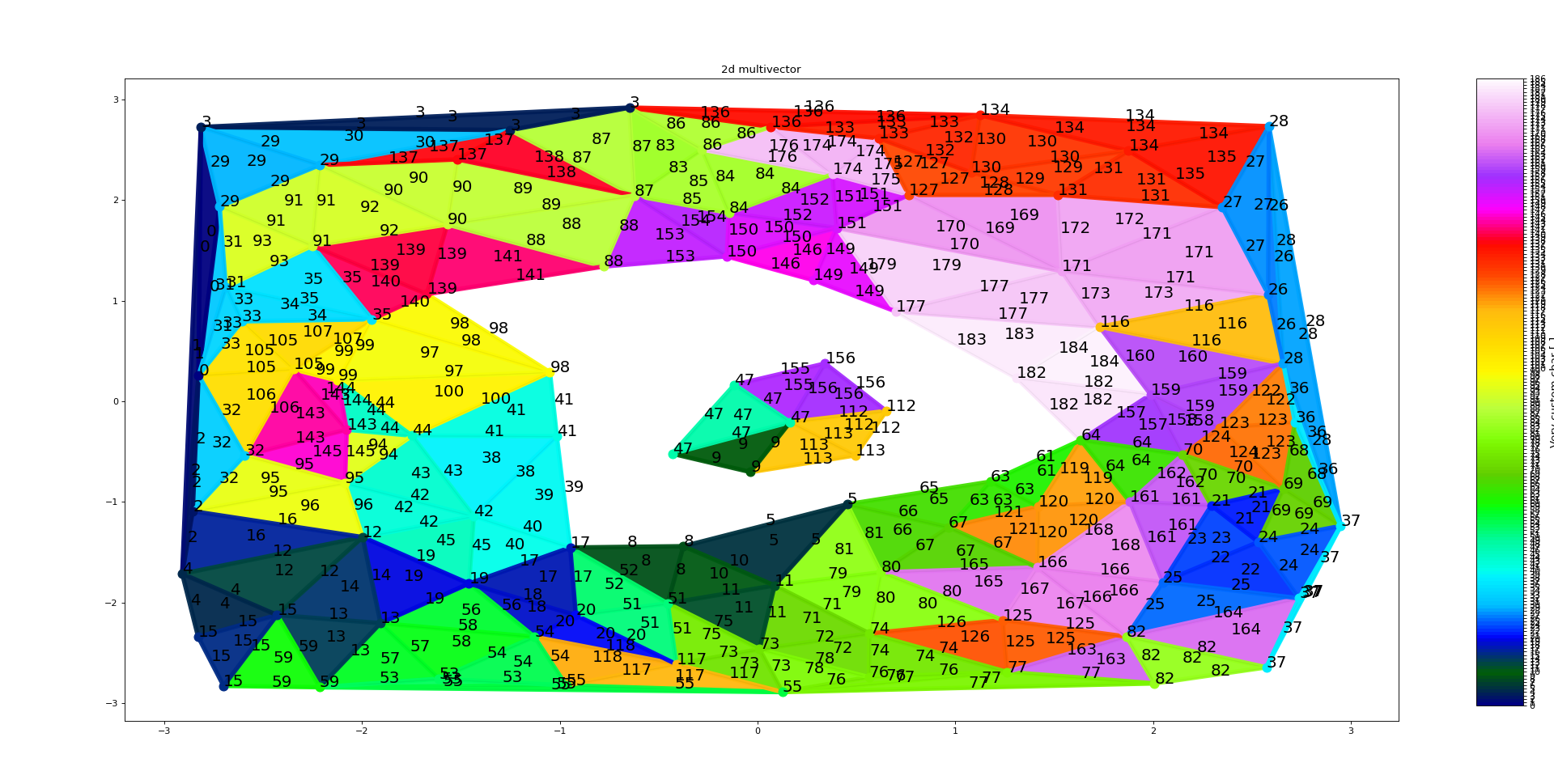}	             
             \caption{ The combinatorial multivector field obtained from the optimization problem (\ref{eq:Intro}) with data from the dynamical system (\ref{eq:dynSysRepOrb}).}	
             \label{fig:ExModel2}
     \end{figure}

\section{Preliminaries}

\subsection{Relations and posets}
Let $X$ be a set. 
A \emph{binary relation} on {$ X$} is a subset $R \subset X \times X$. 
We use the classical notation $xRy$ to denote $(x,y) \in R$.

We recall that a reflexive, antisymmetric and transitive relation $\leq$ is a \emph{partial order} and the pair $(X,\leq)$ is a \emph{poset}.
We recall that a set $A \subset X$ for a poset $(X,\leq)$ is an \emph{upper set} if $\{ z \in X  \mid \exists_{x\in A}\, x \leq z \} \subset A$.
Analogously, $A\subset X$ is a \emph{lower set} if $\{ z \in X  \mid \exists_{x\in A}\, z \leq x \} \subset A$.
A set $A\subset X$ is \emph{convex} with respect to poset $(X,\leq)$ if 
    for every $x,z\in A$ and $y\in X$ such that $x\leq y\leq z$ it follows that $y\in A$.
Equivalently, a convex set is an intersection of a lower set and an upper set.

We say that a \emph{relation} $R$ is an \emph{equivalence relation} if it is reflexive, symmetric, and transitive.
We denote the \emph{equivalence class} of $x$ in $R$ by $[x]_{R}:=\{y\in X\mid x R y\}$.

A \emph{partition} of a space $X$ is a family $\cV$ of non-empty subsets of $X$ such that for every $A,B\in\cV$ we have $ A \cap B = \emptyset$ and $\bigcup \cV = X$.
In particular, partition $\cV$ induces an equivalence relation $R$ defined by $xRy$ if there exists $A\in \cV$ such that $x,y\in A$. 
We denote the equivalence class of a point $x\in X$ induced by partition $\cV$ by $[x]_\cV$.

\subsection{Topological spaces}
Given a topology $\mathcal{T}$ on $X$, we call $(X, \mathcal{T} )$ a topological space. 
When the topology $\mathcal{T}$ is clear from the context, we also refer to $X$ as a topological space. 
We denote the interior, closure, and boundary of a $A\subset X$ with respect to $\cT$ by $\inter_\cT A$, $\cl_\cT A$, and $\bd_\cT A$, respectively.
The \emph{mouth} of A is defined as $\mathrm{mo}_\mathcal{T} A := \mathrm{cl}_\mathcal{T} A \setminus A$.
If the topology is known from the context, we write, e.g., $\cl_X A$, or we skip the subscript completely.
Let $Y\subset X$. The induced topology on $Y$ is defined as $\cT_Y:=\{U\cap Y\mid U\in\cT\}$.
The closure of $A\subset Y$ with respect to the induced topology on $Y$ will be denoted by $\cl_{\cT_Y} A$ or simply by $\cl_Y A$.
Similarly, for the interior, boundary, and mouth.

A set $A\subset X$ is $\emph{locally~closed}$ if every $x \in A$
admits a neighborhood $U$ in $X$ such that $A \cap U$ is closed in $U$.

\begin{prop}
\cite[Problem 2.7.1]{En1989}. 

    Assume $A$ is a subset of a topological space $X$.
Then the following conditions are equivalent.
\begin{enumerate}[label=(\roman*)]
    \item $A$ is locally closed,
    \item $\mo A=\cl A\setminus A$ is closed,
    \item $A$ is a difference of two closed subsets of $X$,
    \item  $A$ is an intersection of an open set in $X$ and a closed set in $X$.
\end{enumerate}

It is easy to see that the intersection of a finite family of locally closed subsets is locally closed.
\end{prop}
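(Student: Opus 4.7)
The plan is to establish the cyclic chain of implications $(i)\Rightarrow(iv)\Rightarrow(iii)\Rightarrow(ii)\Rightarrow(i)$. None of the four individual steps is deep; taken together they give the equivalence for free. I would not try to prove everything simultaneously or seek one clever algebraic identity, because the subspace-topology condition in $(i)$ and the global descriptions in $(iii)$ and $(iv)$ are most naturally compared in the cyclic order above.

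For $(i)\Rightarrow(iv)$, for each $x\in A$ I choose an open $U_x\subset X$ with $A\cap U_x$ closed in $U_x$, and set $U:=\bigcup_{x\in A}U_x$. Then $U$ is open, $A\subset U$, and I claim $A=U\cap\cl_X A$. The inclusion $\subset$ is trivial; for $\supset$, given $y\in U\cap\cl_X A$ I pick $x$ with $y\in U_x$, observe that $y\in\cl_X A\cap U_x=\cl_{U_x}(A\cap U_x)$, and conclude $y\in A\cap U_x$ because $A\cap U_x$ is closed in $U_x$. For $(iv)\Rightarrow(iii)$, if $A=U\cap F$ with $U$ open and $F$ closed, then $A=F\setminus(X\setminus U)$ exhibits $A$ as a difference of two closed subsets of $X$. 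For $(iii)\Rightarrow(ii)$, I write $A=F_1\setminus F_2$ with $F_1,F_2$ closed, note $\cl_X A\subset F_1$, and then a direct calculation gives $\mo A=\cl_X A\cap F_2$, which is closed. Finally, for $(ii)\Rightarrow(i)$, given $x\in A$ the set $U:=X\setminus\mo A$ is an open neighborhood of $x$, and since $\cl_X A=A\cup\mo A$ one has $A\cap U=\cl_X A\cap U$, which is closed in $U$.

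The only genuinely non-formal ingredient, and hence the main obstacle, is the identity $\cl_X A\cap U_x=\cl_{U_x}(A\cap U_x)$ used in $(i)\Rightarrow(iv)$: one inclusion follows from $A\cap U_x\subset A$, but the other requires noticing that any $X$-neighborhood of a point $y\in U_x$ can be shrunk inside the open set $U_x$ while still meeting $A$, hence meeting $A\cap U_x$. Once this subspace-topology point is recorded, all remaining implications reduce to elementary set algebra and complementation. The concluding assertion of the proposition, that finite intersections of locally closed sets are locally closed, then follows immediately from characterization $(iv)$: the intersection of two sets of the form $U_i\cap F_i$ can be rewritten as $(U_1\cap U_2)\cap(F_1\cap F_2)$, again an intersection of an open set and a closed set, and the general case follows by induction.
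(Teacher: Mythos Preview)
Your cyclic argument $(i)\Rightarrow(iv)\Rightarrow(iii)\Rightarrow(ii)\Rightarrow(i)$ is correct in every step, including the subspace-closure identity you flag in $(i)\Rightarrow(iv)$ and the set-algebraic computation $\mo A=\cl_X A\cap F_2$ in $(iii)\Rightarrow(ii)$; the finite-intersection remark via characterization $(iv)$ is also fine. There is, however, nothing to compare against: the paper does not supply its own proof of this proposition but merely cites it as \cite[Problem 2.7.1]{En1989}, so your write-up stands on its own as a complete standard proof of the quoted result.
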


We recall that a topological space is a $T_0$ topological space if, for every pair of distinct points of $X$, at least one of them has a neighborhood not containing the other.
We will be particularly interested in $T_0$ finite topological spaces.
By the Alexandroff theorem \cite{Alexandroff_ftop} we can identify them with partial orders.
\begin{thm}\label{thm:finite_top} \cite{Alexandroff_ftop}
For a finite poset $(P,\leq)$ the family $\cT_{\leq}$ of upper sets of $\leq$ is a $T_0$ topology on $P$. For a finite $T_0$ topological space $(X,\cT)$, $x,y\in X$ the relation $x \leq_{\cT} y$ defined by $x \in \cl_{\cT} \{ y \}$ is a partial order on $X$. Moreover, the two associations that relate $T_0$ topologies and partial orders are mutually inverse.
\end{thm}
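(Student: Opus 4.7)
The plan is to verify four things in sequence: (i) $\cT_{\leq}$ is a $T_0$ topology, (ii) $\leq_{\cT}$ is a partial order, (iii) starting from a poset and going through $\cT$ returns the same poset, and (iv) starting from a $T_0$ topology and going through $\leq$ returns the same topology. Finiteness will be crucial in several spots, most notably to guarantee that arbitrary intersections of upper sets are open and that each point has a minimal open neighborhood.

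For (i), I would observe that $\emptyset$ and $P$ are trivially upper sets and that unions and intersections of upper sets are upper sets (the intersection axiom is automatic in the finite setting). For the $T_0$ property, given distinct $x,y\in P$, antisymmetry forces $x\not\leq y$ or $y\not\leq x$; in the first case the principal upper set $\uparrow x := \{z\mid x\leq z\}$ is open, contains $x$, and misses $y$.

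For (ii), reflexivity is immediate from $y\in\cl\{y\}$. Transitivity follows from the standard fact that $x\in\cl\{y\}$ means every open neighborhood of $x$ contains $y$, so if also $y\in\cl\{z\}$ then every open neighborhood of $x$ contains $z$, giving $x\in\cl\{z\}$. Antisymmetry is the only spot where $T_0$ is used: if $x\in\cl\{y\}$ and $y\in\cl\{x\}$ then $\cl\{x\}=\cl\{y\}$, and by $T_0$ two distinct points cannot have the same closure (the point with a separating neighborhood would contradict membership in the other's closure).

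For the mutual inverseness in (iii)--(iv), the key computation is that closed sets of $\cT_{\leq}$ are exactly the lower sets, so $\cl_{\cT_{\leq}}\{y\} = \{z\mid z\leq y\}$; hence $x\leq_{\cT_{\leq}} y \iff x\leq y$. In the other direction, for a $T_0$ topology $\cT$, finiteness lets me define $U_x := \bigcap\{U\in\cT\mid x\in U\}$, the minimal open neighborhood of $x$; I will show $U_x = \uparrow_{\leq_\cT} x$ by noting $y\in U_x \iff $ every open set through $x$ contains $y \iff x\in\cl\{y\} \iff x\leq_\cT y$. From this, an upper set $A$ of $\leq_\cT$ satisfies $A = \bigcup_{x\in A} U_x$ and is therefore open in $\cT$, while conversely any open set of $\cT$ is immediately seen to be an upper set. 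Thus $\cT = \cT_{\leq_\cT}$.

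The main obstacle, if any, is bookkeeping rather than difficulty: one must keep straight the direction of the order (the convention here is $x\leq y \iff x\in\cl\{y\}$, so the closed sets are lower sets and open sets are upper sets) and invoke finiteness precisely at the two places it is needed, namely closure under arbitrary intersections and the existence of the minimal neighborhoods $U_x$.
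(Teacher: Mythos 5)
Your proof is correct. The paper does not actually prove this theorem --- it is quoted directly from Alexandroff's work via the citation --- so there is no in-paper argument to compare against; what you have written is the standard, complete proof of the correspondence, and all four steps (topology axioms and $T_0$, the partial-order axioms for $\leq_{\cT}$, and the two composites being identities) are handled soundly. One small remark: upper sets are closed under \emph{arbitrary} intersections for any poset, so finiteness is not needed for the intersection axiom in step (i); the only place it is genuinely essential is the existence of the minimal open neighborhoods $U_x$ in step (iv), which you correctly identify as the crux of recovering $\cT$ from $\leq_{\cT}$.
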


It follows that in this setting all topological concepts can be expressed in terms of partial order concepts and vice versa.
In particular, given a finite $T_0$ topological space $(X, \cT)$ with the associated poset $(X,\leq)$, we get for $A\subset X$
\begin{align}
    \cl_\cT A &:= \{x\in X\mid \exists_{a\in A}\ x\leq a\}, \label{eq:closure_poset}\\
    \bd_\cT A &:= \{x\in X \mid \exists_{a\in A,\ b\in X\setminus A}\ x\leq a \text{ and } x\leq b\}\label{eq:boundary_poset}.
\end{align}

We emphasize that in the setting of finite topological spaces local closedness and convexity coincide \cite[Proposition 1.4.10]{lipinski_phd}.
Hence, $A$ is a locally closed set in a $T_0$ topology iff it is convex with respect to the associated poset. 
We will use these two concepts exchangeably throughout the paper; locally closed when we try to emphasize the topological context and convex when we focus on the algorithmical or the combinatorial aspect.

\subsection{Combinatorial multivector fields}

 All of the definitions in this subsection can be found in \cite{lipinski2019conley}.

Let X be a finite topological space. A \emph{combinatorial multivector} or briefly a \emph{multivector} is a
locally closed subset $V \subset X$. 
A \emph{combinatorial multivector field} on $X$, or briefly a \emph{multivector field}, is a partition $\mathcal{V}$ of $X$ into multivectors.

Since $\cV$ is a partition, we can denote by $[x]_{\mathcal{V}}$ the unique multivector in $\cV$ that contains $x\in X$. 
If the multivector field $\cV$ is clear from the context, we write briefly $[x]$.
We say that a multivector $V\in\cV$ is \emph{critical} if the relative singular homology $H(\cl V, \mo V )$ is non-trivial.
A multivector $V$ which is not critical is called \emph{regular}. 
We say that a set $A \subset X$ is \emph{$\cV$-compatible} if for every $x \in X$ either $[x] \cap A = \emptyset$ or $[x] \subset A$.

Multivector field $\mathcal{V}$ on $X$ induces a  multivalued map $\Pi_{\mathcal{V}} : X \multimap X$ given by 
\begin{align}\label{eq:piv}
    \Pi_{\mathcal{V}}(x)=[x]_{\mathcal{V}}\cup \cl x~.
\end{align}

We consider a combinatorial dynamical system given by the iterates of~$\Pi_\cV$.

A \emph{solution} of a combinatorial dynamical system $\Pi_\cV:X\multimap X$ in $A\subset X$ is a partial map $\varphi:\mathbb{Z}\nrightarrow A$ whose domain, denoted $\dom \varphi$,  is a $\mathbb{Z}$-interval and for any $i,i+1\in \dom \varphi$ the inclusion $\varphi(i+1)\in \Pi_\cV(\varphi(i))$ holds. Let us denote by $\textup{Sol}(A)$ the set of all solutions $\varphi$ such that $\img\varphi\subset A$. $\textup{Sol}(X)$ is the set of all solution of $\Pi_\cV$.
If $\dom\varphi$ is a bounded interval then we say that $\varphi$ is a \emph{path}.
If $\dom\varphi=\ZZ$ then $\varphi$ is a \emph{full solution}. 

A full solution $\varphi : \mathbb{Z} \rightarrow X$ is \emph{left-essential} (respectively \emph{right-essential})
if for every regular $x \in \img\varphi$ the set $\{ t \in \mathbb{Z}\mid \varphi(t) \notin [x]_{\mathcal{V}} \}$ is left-infinite (respectively right-infinite). 
We say that $\varphi$ is \emph{essential} if it is both left- and right-essential.
The collection of all essential solutions $\varphi$ such that $\img\varphi\subset A$ is denoted by $\textup{eSol}(A)$.
%A solution $\varphi  \mathbb{Z} \rightarrow X $ is maximal if either $\varphi$ is an essential solution or a finite path that cannot be extended to an essential solution.

The \emph{invariant} part of a set $A\subset X$ is 
$\Inv A := \bigcup \{\img \varphi\mid \varphi\in\esol(A)\}$.
In particular, if $\textup{Inv} A = A$ we say that $A$ is an \emph{invariant set} for a multivector field $\mathcal{V}$.

A closed set $N\subset X$ \emph{isolates} invariant set $S \subset N$ if the following conditions are satisfied:
\begin{enumerate}[label=(\roman*)]
    \item every path in $N$ with endpoints in $S$ is a path in $S$,
    \item $\Pi_{\mathcal{V}}(S) \subset N$.
\end{enumerate}
In this case, $N$ is an \emph{isolating set} for $S$. 
If an invariant set $S$ admits an isolating set then we say that $S$ is an \emph{isolated invariant set}.
The \emph{homological Conley index} of an isolated invariant set $S$ is defined as $\operatorname{Con}(S):=H(\cl S,\mo S)$.

Let $A \subset X$.
By $\bigl\langle A \bigl\rangle_{\mathcal{V}}$ we denote the intersection of 
all locally closed and $\cV$-compatible sets in $X$ containing $A$.
We call this set the $\mathcal{V}$-\emph{hull} of $A$. 
The combinatorial $\alpha$-\emph{limit set} and $\omega$-\emph{limit set} for a full solution $\varphi$ are defined as
\begin{align*}
    & \alpha(\varphi) := \Bigl\langle \bigcap\limits_{t \in \mathbb{Z}^-}\varphi((-\infty,t]) \Bigl\rangle_\mathcal{V}\ , \\
    & \omega (\varphi) := \Bigl\langle \bigcap\limits_{t \in \mathbb{Z}^+}\varphi([t, \infty))  \Bigl\rangle_\mathcal{V}\ .
\end{align*}

Let $S\subset X$ be a $\cV$-compatible, invariant set.
Then, a finite collection $\mathcal{M}=\{M_p\subset S\mid p\in\mathbb{P}\}$ is called a \emph{Morse decomposition} of $S$ if there exists a finite poset $(\mathbb{P},\le)$ such that the following conditions are satisfied:
\begin{enumerate}[label=(\roman*)]
    \item $\mathcal{M}$ is a family of mutually disjoint, isolated invariant subsets of $S$,
    \item for every $\varphi\in\esol(S)$ either $\img\varphi \subset M_r$  for an $r \in \mathbb{P}$
or there exist $p, q \in \mathbb{P}$ such that $q > p$,    $\alpha(\varphi)\subset M_q \text{, and } \omega(\varphi)\subset M_p$.
 \end{enumerate}
We refer to the elements of $\mathcal{M}$ as \emph{Morse sets}.

%\subsection{Optimization}

\section{Construction of a combinatorial multivector field from data}

    We show a construction of a combinatorial multivector field by solving a linear optimization problem with binary variables. Let $ K $ be a simplicial complex, $ T $ the set of toplexes in $K$, and $N$ be the number of simplices. We also have a map $ V : K \to \mathbb{R}^d $ that assign a vector to each simplex.

    \subsection{Optimization problem : Objective function}
        First, we define the variables of our optimization problem. Let $ z(\sigma, \tau)  \in \{ 0, 1 \} $ be a variable where $ \sigma, \tau \in K $. If $ z(\sigma, \tau) = 1 $, then $ \sigma $ and $ \tau $ are in the same multivector. In figures, we draw an arrow from $ \sigma $ to $ \tau $, if $ z(\sigma, \tau) = 1 $, and a simplex $ \sigma $ is red, if $ z(\sigma, \sigma) = 1 $. The idea is to assign a cost $c$ to each variable $ z $. We want to define a minimization problem. Therefore, if $ c $ is low, then $ \sigma $ and $ \tau $ should be in the same multivector. Let $ c_j $ be a real value associated to the variable $ z_j(\sigma, \tau) $. We also need the following map $ W(\sigma, \tau) : K \times K \to \mathbb{R}^n $ given by $ W(\sigma, \tau) = b(\tau) - b(\sigma) $ where $ b(\sigma) $ is the barycenter of $ \sigma $. We are going to compare two vectors with the cosine similarity 
        \begin{equation*}
            d(u, v) = 1 - \frac{u\cdot v}{\| u \| \| v \|}.
        \end{equation*}
        We have three different costs $ c_j$ for a variable $ z_j(\sigma, \tau )$ to consider. If $ V(\sigma) \neq \vec{0} $ and $ \sigma \neq \tau $, then we set $ c_j = d(V(\sigma), W(\sigma, \tau)) $. If $ V(\sigma) = \vec{0} $, then $ d(V(\sigma), u) = 1$ for any vector $u$. This can generate some errors in the final result, and we obtain better result to set $ c_j = 2 $. If $ \sigma = \tau $, then $ c_j $ is associated to $ z(\sigma, \sigma) $ represents the cost to have a simplex alone in a multivector. We assign a parameter $ \alpha \in \mathbb{R} $.  In summary, the cost $ c_j $ is define by
        \begin{equation}\label{eq:CostVect}
            c_j := \begin{cases}
                    d(V(\sigma), W(\sigma, \tau)) & \text{If } V(\sigma) \neq \Vec{0} \text{ and } \sigma \neq \tau \\
                    2 & \text{If } V(\sigma) = \Vec{0} \\
                    \alpha & \text{Otherwise }
            \end{cases}.
        \end{equation}

        We will minimize the function $ f(\vec{z}) = \vec{z} \cdot \vec{c} $.  
        
        \subsection{Model 1 : Generalization of the CDS Model}
        
            We generalize the case of combinatorial dynamical system in the sense of Forman from \cite{arDDC_FiniteVecField}. Let $ D_{n \times m} $ be a binary matrix with $ n = \vert K \vert $ and $ m $ the number of variables. We assign an index $i$ for each simplex. Let $ \sigma_{i} $ be associated with the $i$th row and $ z_j(\beta, \tau) $ associated to the $j$th column. The entries of $ D $ are :
             \begin{equation*}
                D_{i, j} := \begin{cases}
                    1 & \text{if } \sigma_i = \beta \text{ or } \sigma_i = \tau, \\
                    0 & \text{otherwise}
            \end{cases}.
            \end{equation*}

            We want to have that all simplices are at least in a matching. We add the constraint $ D\vec{z} \geq \vec{1} $. We need to add more constraints to satisfy the convexity of multivector. Therefore, we add three new type of constraints: 
            \begin{gather}
                z(\sigma_i, \tau) - z(\sigma_j, \tau) \leq 0,\label{eq:ConvConst1} \\
                z(\sigma_i, \tau) + z(\sigma_j, \tau) - z(\sigma_i, \sigma_j) \leq 1, \label{eq:ConvConst2}  \\
                z(\sigma_i, \sigma_j) + z(\sigma_j, \tau) - z(\sigma_i, \tau) \label{eq:ConvConst3}  \leq 1
            \end{gather}.
            for each triplet $ \sigma_i < \sigma_j < \tau $.

            \begin{ex}  \label{ex:constConvex}
                Let $ x $ be a 0-simplex, $ \sigma_1, \sigma $ be $1$-simplices, and $ \tau $ be a $2$-simplex such that $ x < \sigma_1, \sigma_2 < \tau$ which are represented at the Figure (\ref{fig:exModel1ConvexCstr}). If $ z(x, \tau) = 1 $, then $ z(\sigma_1, \tau) = 1 = z(\sigma_2, \tau)$ by (\ref{eq:ConvConst1}). We have $ z(x, \tau) + z(\sigma_1, \tau) = 2 $. To satisfy the constraint (\ref{eq:ConvConst2}), we need to have $ z(\sigma_1, \tau)  = 1 $.

                Let $ \rho \neq \tau $ be a $2$-simplex such that $ x < \sigma_1 < \rho$. We suppose that $ z(\sigma_1, \rho) = 1$ and $ z(x, \sigma_1) + z(\sigma_1, \rho) = 2 $. To satisfy (\ref{eq:ConvConst3}), we need to have $ z(x, \rho) = 1 $. 
            \end{ex}
            %%%%%%%%%%%%%%%%%%%%%%%%%%%%%%%%%%%%%
            \begin{figure}
      	         \center
                 \includegraphics[height=6cm, width=7.5cm, scale=1.00, angle=0 ]{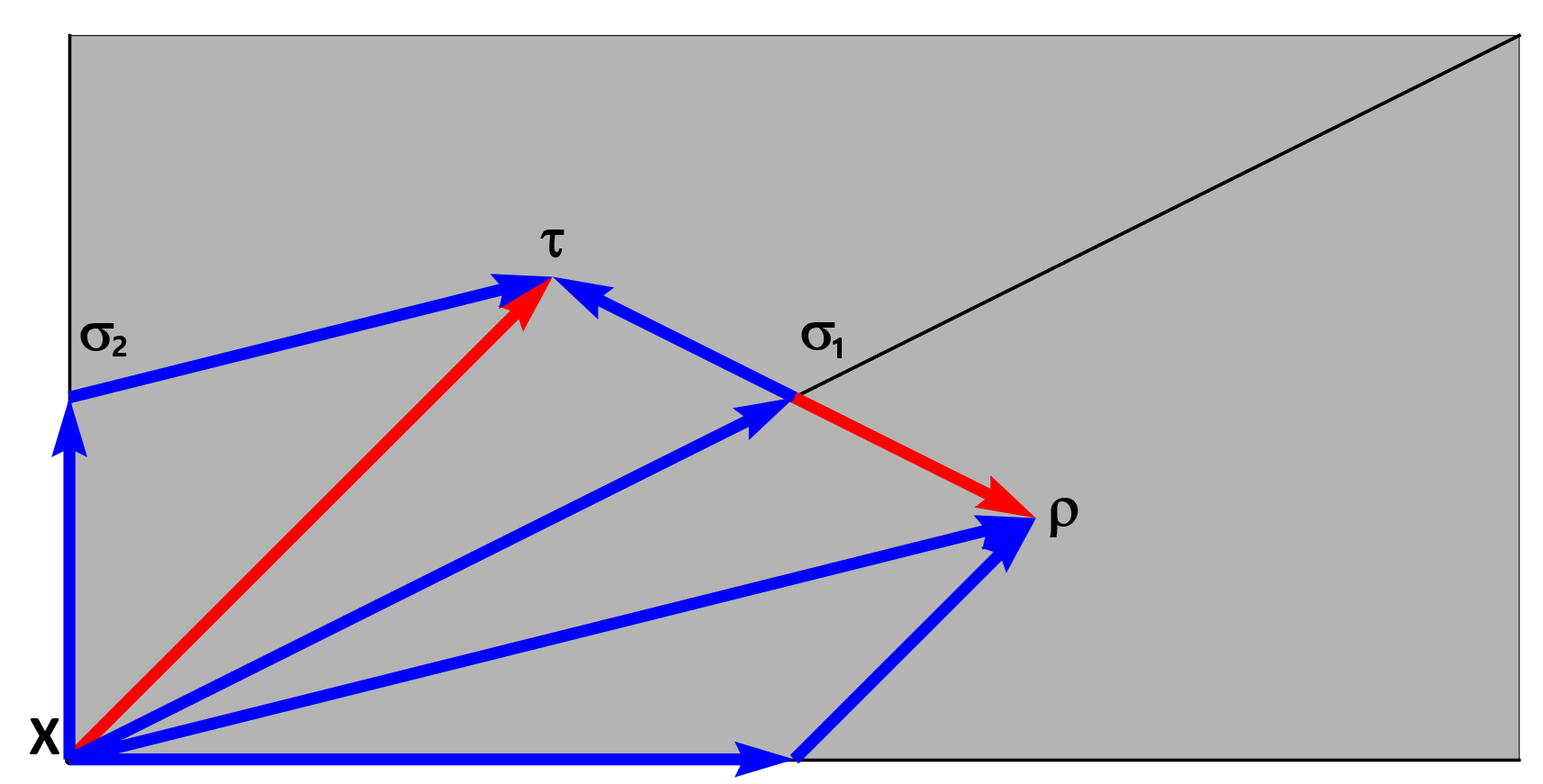}         
                 \caption{The red arrows are the the suppositions of the Example (\ref{ex:constConvex}).  By the constraints (\ref{eq:ConvConst1}) (\ref{eq:ConvConst2}), and (\ref{eq:ConvConst3}), we need to set some variables to $1$ which are represented by blue arrows. }	
                 \label{fig:exModel1ConvexCstr}
             \end{figure}
            %%%%%%%%%%%%%%%%%%%%%%%%%%%%%%%%%%%

            We obtain the following linear minimization problem with binary variables :
            \begin{equation}    \label{eq:MinOptGenCDS}
                \begin{aligned}
            		& \underset{z(\sigma, \tau) \in \{ 0, 1 \} }{\text{minimize}}
            		& & f(\vec{z}) = \vec{c} \cdot \vec{z} \\
            		& \text{subject to}
                    & & Dz \leq \vec{1}, \\
                    & & & z(\sigma_i, \tau) - z(\sigma_j, \tau) \leq 0 \, & \text{for } \sigma_i < \sigma_j < \tau,\\
                    & & & z(\sigma_i, \tau) + z(\sigma_j, \tau) - z(\sigma_i, \sigma_j) \leq 1 \,&  \text{for } \sigma_i < \sigma_j < \tau,\\
                    & & & z(\sigma_i, \sigma_j) + z(\sigma_j, \tau) - z(\sigma_i, \tau) \leq 1 \, & \text{for } \sigma_i < \sigma_j < \tau.
        		\end{aligned}
            \end{equation}
    
            First, there exist solutions of (\ref{eq:MinOptGenCDS}) that do not induce a combinatorial multivector field. Because, the convexity constraints are only satisfied for triplet $ \sigma_i < \sigma_j < \tau $.
            \begin{ex}
               Consider the simplicial complex $ K $ from the Figure (\ref{fig:ceModel1Convex}) with a $\vec{z}$ defined by the arrows. It is a feasible solution of the optimization problem (\ref{eq:MinOptGenCDS}). The set $ V := \{ [x_1],$ $ [x_1, x_2], [x_1, x_4], [x_1, x_2, x_4], [x_2], [x_2, x_4], [x_2, x_3], [x_2,$ $  x_3, x_4], [x_3, x_4], [x_1, x_3, x_4]  \} $  is not convex. Because we have $[x_1] $ and $ [x_1, x_3,$ $ x_4] $ in the same set, and we need to add the simplex $ [x_1, x_3] $ in $ V $ to satisfy the convexity.
            \end{ex}

            %%%%%%%%%%%%%%%%%%%%%%%%%%%%%%%%%%%%%%%%%%%%%%%
            \begin{figure}
      	         \center
                 \includegraphics[height=6cm, width=8.5cm, scale=1.00, angle=0 ]{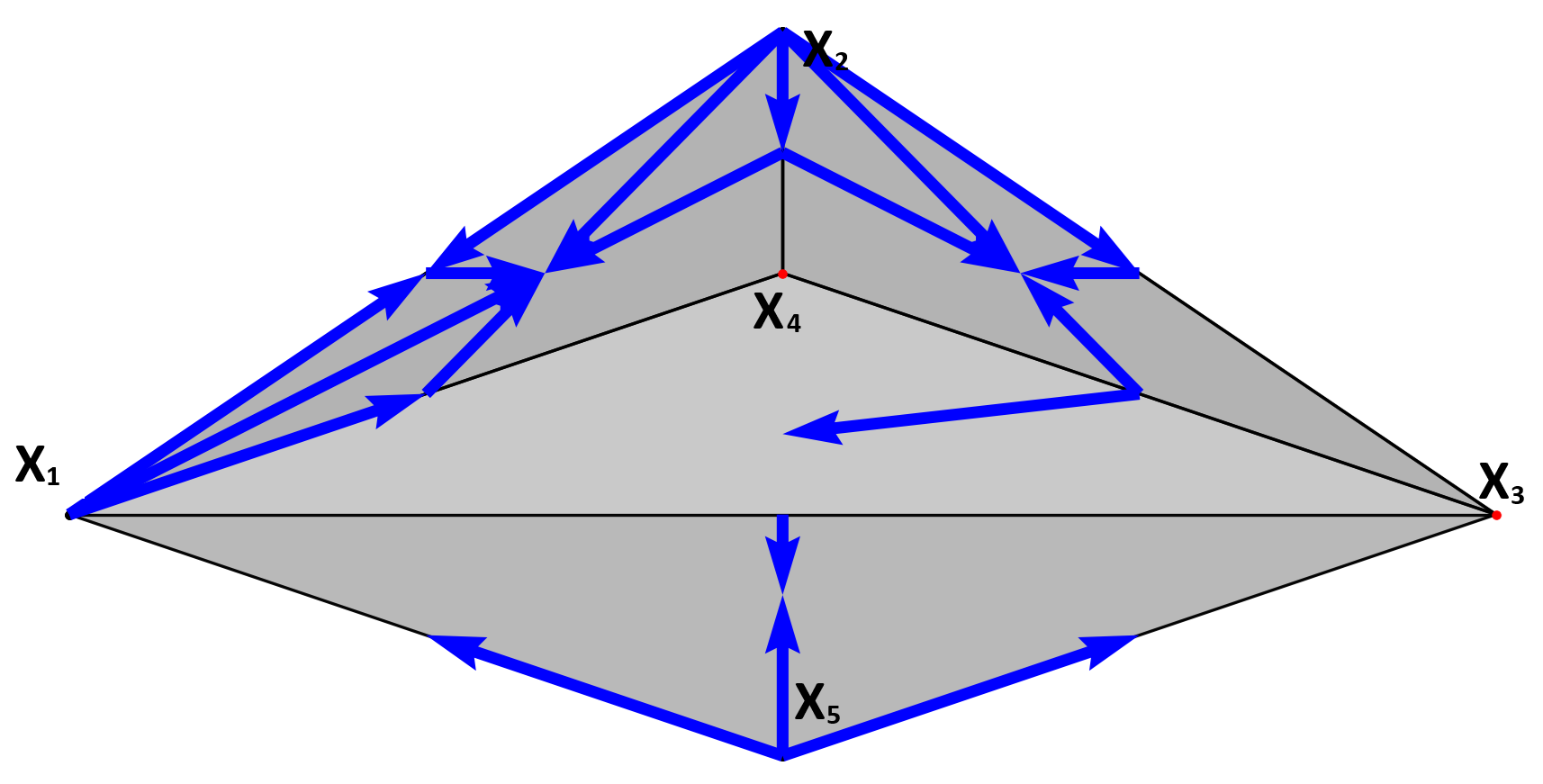}         
                 \caption{This is a feasible solution of (\ref{eq:MinOptGenCDS}) that does not induce a combinatorial vector field. The convexity condition is not satisfied.}	
                 \label{fig:ceModel1Convex}
             \end{figure}

            %%%%%%%%%%%%%%%%%%%%%%%%%%%%%%%%%%%%%%%%%%%%
           
            We need to apply a post-processing method to the solution of (\ref{eq:MinOptGenCDS}) to ensure the convexity property. For a non-convex set, we can merge with another set or divide it in two. 
            
            There is still an advantage for (\ref{eq:MinOptGenCDS}). Any combinatorial multivector field can be obtain from a solution of (\ref{eq:MinOptGenCDS}). 
             \begin{thm}
                 Let $ \mathcal{V} $ a combinatorial multivector field on $ K $. There exists a solution $ \vec{z} $ of (\ref{eq:MinOptGenCDS}) such that it induce the same combinatorial multivector field as $ \mathcal{V} $.
             \end{thm}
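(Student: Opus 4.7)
The plan is to construct an explicit feasible $\vec z$ directly from $\mathcal V$ and then read $\mathcal V$ back off from it; I interpret the word \emph{solution} in the statement as \emph{feasible point}, the content of the statement being the expressiveness of (\ref{eq:MinOptGenCDS}) rather than optimality.

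First I would set, for every pair $(\sigma, \tau)$ of simplices that indexes a variable of (\ref{eq:MinOptGenCDS}),
\[
z(\sigma, \tau) := \begin{cases} 1 & \text{if } [\sigma]_{\mathcal V} = [\tau]_{\mathcal V}, \\ 0 & \text{otherwise,} \end{cases}
\]
so in particular $z(\sigma, \sigma) = 1$ for every $\sigma \in K$. Because $\mathcal V$ is a partition, the relation ``$\sigma \sim \tau$ iff $z(\sigma, \tau) = 1$'' is already an equivalence relation whose classes are precisely the multivectors of $\mathcal V$; hence, once feasibility is established, the multivector field read off from $\vec z$ coincides with $\mathcal V$ on the nose.

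Next I would verify the covering constraint $D\vec z \geq \vec 1$, as motivated by the sentence preceding (\ref{eq:MinOptGenCDS}). Since the column of $D$ indexed by $z(\sigma_i, \sigma_i)$ places a $1$ in row $i$ and $z(\sigma_i, \sigma_i) = 1$ by construction, every row of $D\vec z$ is already at least $1$. Then I would check the three convexity constraints (\ref{eq:ConvConst1})--(\ref{eq:ConvConst3}) for each triplet $\sigma_i < \sigma_j < \tau$. For (\ref{eq:ConvConst1}), if $z(\sigma_i, \tau) = 1$ then $\sigma_i$ and $\tau$ lie in a common multivector $V$, which is locally closed and therefore convex in the face poset by the identification recalled after Theorem \ref{thm:finite_top}; since $\sigma_i < \sigma_j < \tau$, convexity forces $\sigma_j \in V$ and hence $z(\sigma_j, \tau) = 1$. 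For (\ref{eq:ConvConst2}) and (\ref{eq:ConvConst3}), both inequalities reduce to transitivity of the partition: if two of the three variables $z(\sigma_i, \tau)$, $z(\sigma_j, \tau)$, $z(\sigma_i, \sigma_j)$ equal $1$, then $\sigma_i, \sigma_j, \tau$ all lie in the same multivector, forcing the remaining variable to equal $1$ as well and the left-hand side to equal $1+1-1=1$.

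The main obstacle is not conceptual but notational: one must fix the precise index set over which $z(\sigma, \tau)$ ranges, in particular confirming that the self-pairs $(\sigma, \sigma)$ are among the variables (as the cost definition (\ref{eq:CostVect}) and the role of red simplices suggest), and one must interpret the matrix constraint of (\ref{eq:MinOptGenCDS}) consistently with the prose preceding it, which states $D\vec z \geq \vec 1$. Once these conventions are fixed, no argument beyond local closedness of multivectors and the transitivity of the partition is needed.
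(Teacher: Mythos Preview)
Your proof is correct and follows essentially the same approach as the paper's: define $\vec z$ by setting $z(\sigma,\tau)=1$ whenever $\sigma$ and $\tau$ share a multivector, then verify feasibility using convexity of multivectors for (\ref{eq:ConvConst1}) and the partition structure for (\ref{eq:ConvConst2})--(\ref{eq:ConvConst3}). The only cosmetic difference is that the paper sets $z(\sigma,\sigma)=1$ only when $\{\sigma\}$ is a singleton multivector while you set it for every $\sigma$; both choices are feasible and induce $\mathcal V$, and your explicit remark about reading the matrix constraint as $D\vec z \geq \vec 1$ (matching the prose rather than the displayed $\leq$) is well taken.
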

             \begin{proof}
                From a combinatorial multivector field $ \mathcal{V} $, we define a $ \vec{z} $ as follows. If $ \vert V \vert = 1 $, then, we set $ z(\sigma, \sigma) = 1 $ for $ \sigma \in V $. If $ \vert V \vert > 1 $, then, for $ \sigma, \tau \in V  $ and $ \sigma < \tau $, we set $ z(\sigma, \tau) = 1 $. We verify that $\vec{z} $ is a feasible solution of (\ref{eq:MinOptGenCDS}).
    
                Since $ \mathcal{V} $ is a partition of $K$, then each simplex is a in multivector. Therefore, $ D\vec{z} \geq \vec{1} $.
    
                For the convexity constraint, the variables $ z(\sigma, \sigma) $ are not in. We need only to consider the variables $ z(\sigma, \tau) $ with $ \sigma \neq \tau$. If $ \vert V \vert \leq 2 $, the constraints are satisfied trivially. 

                If $ \vert V \vert > 2 $. For any triplets $ \sigma_i < \sigma_j < \tau $ in $ V $, we have $ z(\sigma_i, \tau) = 1 = z(\sigma_j, \tau) = z(\sigma_i, \sigma_j) $ by construction. Therefore, we obtain equality for the constraints (\ref{eq:ConvConst1}), (\ref{eq:ConvConst2}) and (\ref{eq:ConvConst3}). We have that $ \vec{z} $ is a solution of (\ref{eq:MinOptGenCDS}). 
             \end{proof}
    
            There is an other problem. By minimizing the cost function, we obtain a low count of variables $ z(\sigma, \tau) = 1 $, because all costs are positives. Therefore, we obtain really small multivector, and it can also satisfy the definition of combinatorial dynamical system in the sense of Forman. A solution to this problem is to set the cost of good matching to be negative. We add a new parameter $ \beta \in \mathbb{R} $. We remove the value of $ \beta $ for some costs. For a variable $ z_j(\sigma, \tau) $, the new cost is :
            \begin{equation*}
                c'_j(\sigma, \tau) := \begin{cases}
                    d(V(\sigma), W(\sigma, \tau)) - \beta & \text{If } \sigma \neq \tau \\
                    \alpha & \text{Otherwise}
                \end{cases}
            \end{equation*}
            Let $ g(z) = c' \cdot z $ be the new objective function. If $ \beta = 0 $, we have the previous objective function $ f(z) $. If $ \beta \geq 2 $, then every cost are negatives for variables $z(\sigma, \tau) $. Therefore, the optimal solution will induce a combinatorial multivector field where every simplices of the same connected component are in the same multivector. 

            \begin{lem}\label{lem:ParamCMVF}
                Let $ K $ be a finite simplicial complex. We consider the optimization problem (\ref{eq:MinOptGenCDS}) where all costs are strictly positives.
                \begin{itemize}
                    \item There exist an $ \alpha \in [0,2] $ such that the optimal solution of (\ref{eq:MinOptGenCDS}) induces a combinatorial multivector field.
                    \item If $ \beta = 0 $, then the optimal solution of (\ref{eq:MinOptGenCDS}) induces a combinatorial multivector field.
                \end{itemize}
            \end{lem}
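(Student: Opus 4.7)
The plan is to identify the ``trivial'' singleton assignment as an optimum of the program. Define $\vec z^{\rm triv}$ by $z(\sigma,\sigma)=1$ for every $\sigma\in K$ and $z(\sigma,\tau)=0$ for every $\sigma\neq\tau$. I would first verify that $\vec z^{\rm triv}$ is feasible: the product $D\vec z^{\rm triv}$ equals $\vec 1$ componentwise since each diagonal variable contributes exactly once to its own row of $D$, and all three convexity constraints (\ref{eq:ConvConst1})--(\ref{eq:ConvConst3}) are satisfied vacuously because every variable involving $\sigma\neq\tau$ is zero. The induced partition is the partition of $K$ into singletons, and each singleton $\{\sigma\}$ is locally closed in a finite $T_0$ topology (it equals $\cl\{\sigma\}\setminus\mo\{\sigma\}$), so $\vec z^{\rm triv}$ induces a genuine CMVF whose cost is $\alpha|K|$.

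Next I would give a covering-style lower bound for any competing feasible $\vec z$. Let $c_{\min}>0$ denote the smallest non-diagonal cost, which is positive by strict positivity and finite by finiteness of $K$. If $k$ denotes the number of active non-diagonal variables in $\vec z$, then each such variable covers at most two distinct simplices, so at least $|K|-2k$ diagonal variables must also be active in order for $D\vec z\geq\vec 1$ to hold; consequently
\[
\vec c\cdot\vec z\ \geq\ k\,c_{\min}+(|K|-2k)\,\alpha\ =\ \alpha|K|+k\bigl(c_{\min}-2\alpha\bigr).
\]
For any $\alpha\in(0,c_{\min}/2]$ this right-hand side is bounded below by $\alpha|K|=\vec c\cdot\vec z^{\rm triv}$, so $\vec z^{\rm triv}$ is optimal. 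Since cosine distance takes values in $[0,2]$ we have $c_{\min}\leq 2$, hence $c_{\min}/2\in(0,1]\subset[0,2]$; any $\alpha$ in this interval establishes the first bullet.

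For the second bullet, setting $\beta=0$ leaves $c'_j=c_j$ for every non-diagonal variable, so the lower bound above applies verbatim to the modified objective and the trivial assignment is again optimal (hence induces a CMVF) whenever $\alpha\leq c_{\min}/2$. The subtle point, which I anticipate to be the main obstacle, is the regime $\alpha>c_{\min}/2$: there the trivial assignment may fail to be optimal, so one must separately show that any optimum with some non-diagonal activations still induces a CMVF. My plan there is to observe that the constraints (\ref{eq:ConvConst1})--(\ref{eq:ConvConst3}) force each isolated activation $z(\sigma,\tau)=1$ to fill out the full convex interval $[\sigma,\tau]$ inside one multivector, and then to rule out the counterexample preceding the lemma by a local surgery: if two activated intervals merged into a non-convex class via equivalence closure, I would break a connecting link, pay the diagonal cost $\alpha$ for the few simplices that become uncovered, and use strict positivity of the remaining non-diagonal costs to show the modification is non-increasing in total cost, contradicting the assumed optimality of the non-convex solution.
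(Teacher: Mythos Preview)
Your argument for the first bullet is correct and matches the paper's: choose $\alpha$ small enough that the all-singleton assignment is optimal. The paper uses exactly $\alpha<\min_{\sigma<\tau}c(\sigma,\tau)/2$, which is your $c_{\min}/2$.

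For the second bullet your proof has a genuine gap. You correctly note that for $\alpha>c_{\min}/2$ the trivial assignment need not be optimal, and you propose a surgery that ``breaks a connecting link'' and then pays the diagonal cost $\alpha$ for the simplices left uncovered. But that repair can \emph{increase} the objective: deleting one activation saves $c(\sigma,\tau)\leq 2$, while re-covering the two endpoints costs $2\alpha$, so for $\alpha$ large the net change is positive and the contradiction with optimality fails. Strict positivity of the off-diagonal costs alone does not rescue this, since you are adding costs as well as removing them.

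The paper's surgery avoids this by never introducing diagonal variables. If some class $V$ spans dimensions differing by at least $2$, pick $\sigma<\tau$ in $V$ with $\dim\tau-\dim\sigma=2$; constraints (\ref{eq:ConvConst1})--(\ref{eq:ConvConst2}) then force $z(\sigma,\rho)=z(\rho,\tau)=1$ for each intermediate face $\rho$. One splits $V$ into $V_1=\cl\rho\cap V$ and $V_2=V\setminus V_1$ and simply sets to $0$ every activation $z(\mu,\nu)$ with $\mu\in V_1$, $\nu\in V_2$. No simplex becomes uncovered (everything in $V_1$ is still linked to $\rho$, everything in $V_2$ is still linked to $\tau$), so $D\vec z\geq\vec 1$ persists, yet at least the activation $z(\sigma,\tau)=1$ has been removed, giving a strict decrease since $\beta=0$ makes every off-diagonal cost strictly positive. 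Iterating, the optimum must have every class spanning at most one dimension, and such classes are convex for free---there is nothing strictly between a $k$-simplex and a $(k{+}1)$-simplex. This structural conclusion ($d_V\leq 1$ at the optimum) is the missing idea in your sketch; once you have it, the equivalence-closure worry you raise about non-convex merges simply cannot occur.
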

            \begin{proof}
                Let $ l = \min_{\sigma < \tau} c(\sigma, \tau) $. We choose $ \alpha < \frac{l}{2} $. For any $ \sigma \leq \tau$, we have that for $ c(\sigma, \sigma) + c(\tau, \tau) < l \leq z(\sigma, \tau) $. We consider the solution $ \vec{z}_1 $ where $ z(\sigma, \sigma) = 1 $ for all $ \sigma $, and $ z_1 $ is a feasible solution of (\ref{eq:MinOptGenCDS}). For any other solutions $ \vec{z}_2 $, we have that $ f(\vec{z}_1) < f(\vec{z}_2) $. Finally, $ \vec{z}_1$ induce a combinatorial multivector field $\mathcal{V}$ where $ \vert V \vert = 1 $ for all $ V \in \mathcal{V} $.  

                Let $ \beta = 0 $. Let $ \vec{z} $ a solution of (\ref{eq:MinOptGenCDS}), and $ \mathcal{V} $ the induced partition by $ \vec{z} $. Let $ d_{V} = \max_{\sigma} \dim(\sigma) - \min_{\sigma \in V} $. If $ d_{V} \leq 1 $ for all $ V $, then $ \mathcal{V} $ is a combinatorial multivector field. Because, If $ d_{V} = 0 $, then $ \vert V \vert = 1 $. If $ d_{V} = 1 $, then for all simplex $ \sigma, \tau \in V $ such that $ \sigma < \tau $, then it does not exist an $ \rho $ such that $ \sigma < \rho < \tau $. Therefore, $ V $ is always convex. We want to show that if there exists $  V $ such that $ d_V > 2 $, then the solution $ \vec{z} $ is not a global minimum of (\ref{eq:MinOptGenCDS}). 

                We suppose there exists a $ V $ such that $ d_V > 1 $. There exists $ \sigma < \tau \in V $ such that $ \dim \tau - \dim \sigma = 2 $, and $ z(\sigma, \tau) = 1$. We suppose that $ \tau $ is maximum in $ V $. Since $ K $ is a simplicial complex, there exists $ \rho_1, \rho_2 \in V $ such that $ \sigma < \rho_1, \rho_2 < \tau $. By the constraint of (\ref{eq:MinOptGenCDS}), we have that $ z(\sigma, \rho_i) = 1 = z(\rho_i, \tau) $ for $ i = 1, 2$. The set $V_1 = \cl \rho_i \cap V $ is convex, and the set $ V_2 = V \setminus V_1 $ is also convex. We define a new solution $ \vec{z_1} $ such that $ z(\sigma, \tau) = z_1(\sigma, \tau) $ except for $ z_1(\rho, \tau) = 0 $ for $ \rho \in V_1 $ and $ \tau \in V_2 $. We obtain that $ f(\vec{z_1}) \leq f(\vec{z}) $. We have that $ \vec{z}_1 $ still satisfy the constraints of (\ref{eq:MinOptGenCDS}).

                We can repeat this process until we obtain $ \vec{z}_n $ such that every $ V \in \mathcal{V}_n $ has $ d_{V} \leq 1 $. We have that $ \mathcal{V}_n $ is a combinatorial multivector field.
            \end{proof}
    
            From the previous result, we can always find parameters $ \alpha $, and $ \beta $ to obtain a combinatorial multivector field. But it might gives mediocre results where all multivectors are really small.

            We will discuss about the cost of a convex multivector. As said earlier, the minimization problem will prefer to set a low number of variables to $ 1 $. By the convexity constraints, we need to set more variables to $ 1 $. Consider $ \sigma < \tau \in $ with $ \dim \tau - \dim \sigma > 1 $ and we suppose that all costs are positives.  When $ z(\sigma, \tau ) = 1 $, it implies that $ z(\rho_i, \rho_j) = 1 $ with $ \sigma \leq \rho_i < \rho_j \leq \tau $  by the constraints of (\ref{eq:MinOptGenCDS}) . For a convex set, the real cost is
            \begin{equation}\label{eq:convexCost}
                \sum_{\sigma \leq \rho_i < \rho_j \leq \tau} c(\rho_i, \rho_j).
            \end{equation}
            Therefore, the minimization will prefer to have smaller convex set. One can choose a $ \beta > 0 $ to reduce this effect. Another way to solve this problem is by changing the objective function. We update the cost by 
            \begin{equation}
                c''(\sigma, \tau) = c'(\sigma, \tau) - \sum_{\sigma < \rho < \tau} c(\sigma, \rho).
            \end{equation}
            This approach will reduce the cost of the convex from (\ref{eq:convexCost}), and we can obtain bigger multivector.

            This model is a generalization of the model from \cite{arDDC_FiniteVecField}. It has the advantage to obtain any combinatorial multivector field. We also have two parameters $ \alpha $ and $ \beta $ where $ \alpha $ affects the quantity of multivector with only one simplex and $ \beta $ affects the size of multivector.
            
            Finally, one more hurdle is to solve an optimization problem with binary variables is Np-hard. We tried to solve the problem in the subsection \ref{ss:OverviewMain}. After seven days of solving (\ref{eq:MinOptGenCDS}), we still do not have an optimal solution for this problem.
        
        \subsection{Model 2 : One Toplex per Multivector}

            We want to simplify the variables and the constraints to obtain a model where all solutions induce a combinatorial multivector field. Moreover, we want to this model to be non-parametric. We remove the $ \alpha $ and the $ \beta $ parameters from the optimization problem (\ref{eq:MinOptGenCDS}).

            We change the variables to $ z(\sigma, \tau) $ such that $ \sigma < \tau \in T $. We minimize the same objective function $ f(\vec{z}) = \vec{c} \cdot \vec{z} $ where $ \vec{c} $ is defined by (\ref{eq:CostVect}).

            For the constraints, we change $ Dz \geq \vec{1} $ to $ \sum_{\sigma < \tau_i} z(\sigma, \tau_i) = 1 $ for each $ \sigma \in K \setminus T $. With this new constraint, we want to have that each simplex is associated to a unique toplex. To ensure the convexity of multivector, we simplify the set of constraints of the previous model. We only take the constraint $ z(\sigma_i, \tau) - z(\sigma_j, \tau) $ for each triplet $ \sigma_i < \sigma_j < \tau \in T $ and $ \dim \sigma_i + 1 = \dim \sigma_j $. If $ z(\sigma_i, \tau) = 1 $, then only way to satisfy this constraint is $ z(\sigma_j, \tau) = 1 $. This implies that $ \sigma_i $ and $ \sigma_j $ will be in the same multivector. We obtain the following optimization problem :
            \begin{equation}    \label{eq:MinOptMv}
                \begin{aligned}
            		& \underset{z(\sigma, \tau) \in \{ 0, 1 \} }{\text{minimize}}
            		& & f(\vec{z}) = \vec{c} \cdot \vec{z} \\
            		& \text{subject to}
                    & & \sum_{\sigma < \tau_i} z(\sigma, \tau_i) = 1 \qquad \text{for } \sigma \in K \setminus T \\
                    & & & z(\sigma_i, \tau) - z(\sigma_j, \tau) \leq 0 \, \text{for } (\sigma_i, \sigma_j, \tau).
        		\end{aligned}
            \end{equation}   
            where the triplet $ (\sigma_i, \sigma_j, \tau) $ satisfy $ \dim \sigma_i + 1 = \dim \sigma_j $, and $ \sigma_i < \sigma_j < \tau $.
    
            We need the next Lemma to show that the set of solutions of (\ref{eq:MinOptMv}) is not empty.
            \begin{lem}\label{lem:shavMultivec}
                Let $K$ be a simplicial complex. There exists a combinatorial multivector field $ \mathcal{V} $ such that for each $ V \in \mathcal{V} $, $ V $ has a single toplex $ \tau $, and for each $ \sigma \in V $, we have $ \sigma < \tau $.
            \end{lem}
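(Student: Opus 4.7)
The plan is to produce the partition explicitly by a greedy construction that assigns every simplex to a chosen toplex that dominates it. Enumerate the toplexes of $K$ as $T = \{\tau_1, \tau_2, \ldots, \tau_m\}$ in any fixed order, and for each $k$ set
\[
    V_{\tau_k} := \cl \tau_k \setminus \bigcup_{j < k} \cl \tau_j.
\]
I would then claim that $\mathcal{V} := \{V_{\tau_k} \mid k = 1, \ldots, m\}$ is the desired combinatorial multivector field.

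The verification breaks into four short steps. First, each $V_{\tau_k}$ is non-empty: since $\tau_k$ is a toplex, $\tau_k \le \tau_j$ would force $\tau_k = \tau_j$, so $\tau_k \notin \cl \tau_j$ for $j \ne k$, and hence $\tau_k \in V_{\tau_k}$. Second, the family $\mathcal{V}$ is a partition of $K$: every simplex $\sigma \in K$ is a face of at least one toplex, so the smallest index $k$ with $\sigma \le \tau_k$ is well defined, and $\sigma$ belongs to $V_{\tau_k}$ and no other $V_{\tau_j}$ by construction. Third, each $V_{\tau_k}$ is locally closed: $\cl \tau_k$ is closed, $\bigcup_{j<k}\cl \tau_j$ is a finite union of closed sets hence closed, so $V_{\tau_k}$ is the difference of two closed subsets of $K$, which by the proposition from \cite{En1989} quoted in the preliminaries is equivalent to being locally closed. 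Fourth, $\tau_k$ is the unique toplex of $V_{\tau_k}$, since any other toplex $\tau_j \in V_{\tau_k}$ would satisfy $\tau_j \le \tau_k$, forcing $\tau_j = \tau_k$; and every $\sigma \in V_{\tau_k}$ satisfies $\sigma \le \tau_k$ by definition.

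There is no real obstacle here, as the argument is entirely constructive. The only point requiring a little care is the non-emptiness of each $V_{\tau_k}$, which hinges on the maximality of toplexes; this is exactly what prevents $\tau_k$ from being absorbed into $\cl \tau_j$ for some earlier $j$. Once that is observed, the rest is bookkeeping: disjointness and covering follow from the greedy assignment, and local closedness is immediate from the topological characterization.
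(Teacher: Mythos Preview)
Your proof is correct and takes a genuinely different route from the paper's. The paper argues by an iterative ``shaving'' procedure: it starts from any multivector field in which every multivector contains at least one toplex (e.g., one multivector per connected component), then repeatedly picks a multivector $V$ with more than one toplex, selects a toplex $\tau\in V$, and splits $V$ into the set $V_2$ of simplices whose only maximum in $V$ is $\tau$ and the complement $V_1=V\setminus V_2$, checking that both pieces are convex. Your argument bypasses this iteration entirely by constructing the partition in one stroke via the greedy assignment $V_{\tau_k}=\cl\tau_k\setminus\bigcup_{j<k}\cl\tau_j$, and then reads off local closedness from the ``difference of two closed sets'' characterization. Your version is shorter and more explicit, and it makes the dependence on the enumeration of $T$ transparent; the paper's splitting argument, on the other hand, shows a refinement property (any multivector field with a toplex in every multivector can be refined to one of the desired form), which is a slightly stronger statement than the bare existence claim.
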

            \begin{proof}
                We take any combinatorial multivector field $ \mathcal{V} $ such that each multivector $ V $ has at least one toplex. This is always possible, because we can take a single multivector covering each connected components of $K$. We take any $ V \in \mathcal{V} $ such that it has strictly more than one toplex. We choose a single toplex $ \tau \in V $. We divide $ V $ in two subsets $ V_1 $ and $ V_2 $. We define $ V_2 := \{ \sigma \in V \mid \sigma \text{ has a single maximum and its } \tau \} $, and $ V_1 := V \setminus V_2$. We need to show that $ V_1 $ and $ V_2 $ are convex. Let $ x, z \in V_2 $ such that $ x \leq z $. Let $ y \in V $ such that $ x \leq y \leq z $. We have that $ y $ has the same maximum $ x $ and $ z $. This implies that $ y \in V_2 $, and $ V_2 $ is convex. Let $ x, z \in V_1 $ such that $ x \leq z $. Let $ y \in V $ such that $ x \leq y \leq z $. We have that $ z $ has either more than one maximum or it has a single maximum, if it is or not $ \tau $. Moreover, $ y $ has either the same maximum has $ z $ or more. This implies that $ y \in V_1 $, and $ V_1 $ is convex.
    
                We define a new combinatorial multivector field $ \mathcal{V}' $ with $ \mathbb{V} $ by removing $ V $, adding $ V_1 $ and $ V_2 $. We can repeat this process until there is only one toplex in each multivector and we obtain the expected result.
            \end{proof}
    
            Now, we can show the main result of this section.
            \begin{thm}
                The solutions of (\ref{eq:MinOptMv}) induce a combinatorial multivector field $ \mathcal{V} $.
            \end{thm}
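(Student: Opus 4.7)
The plan is to construct the partition $\mathcal{V}$ explicitly from any feasible solution $\vec{z}$ of (\ref{eq:MinOptMv}) and verify the two defining properties of a combinatorial multivector field: that $\mathcal{V}$ partitions $K$, and that every block is locally closed (equivalently, convex in the face poset). For each toplex $\tau \in T$, I would set $V_\tau := \{\tau\} \cup \{\sigma \in K \setminus T \mid z(\sigma, \tau) = 1\}$ and $\mathcal{V} := \{V_\tau \mid \tau \in T\}$.

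First I would confirm that (\ref{eq:MinOptMv}) admits at least one feasible solution, so that the statement is not vacuous. Invoking Lemma (\ref{lem:shavMultivec}), we obtain a combinatorial multivector field $\mathcal{V}_0$ in which each multivector has a unique toplex $\tau$ and every other simplex of the multivector is a face of $\tau$. Declaring $z(\sigma, \tau) := 1$ exactly when $\sigma \neq \tau$ and $\sigma$ belongs to the $\mathcal{V}_0$-block with toplex $\tau$, and $0$ otherwise, produces a vector that satisfies both the equality constraints (each non-toplex lies in exactly one block of $\mathcal{V}_0$) and the inequality constraints (the blocks of $\mathcal{V}_0$ are already locally closed, so all intermediate faces are present).

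Next, for an arbitrary feasible $\vec{z}$, I would verify that $\mathcal{V}$ is a partition. Since toplexes are maximal in $K$, no toplex distinct from $\tau$ can appear below $\tau$, and the equality constraint $\sum_{\sigma < \tau_i} z(\sigma, \tau_i) = 1$ assigns each non-toplex $\sigma$ to exactly one $V_{\tau_i}$; together with the fact that every $\tau \in T$ belongs only to its own $V_\tau$, this gives a partition of $K$. The bulk of the work is to show each $V_\tau$ is convex. Given $x, z \in V_\tau$ with $x < y < z$, we have $x < \tau$ and $z(x, \tau) = 1$, and $y \leq z \leq \tau$. I would select a maximal chain $x = \sigma_0 < \sigma_1 < \dots < \sigma_m = y$ with $\dim \sigma_{k+1} = \dim \sigma_k + 1$, which exists because the face poset of a simplicial complex is graded. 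For each $k < m$, the triple $(\sigma_k, \sigma_{k+1}, \tau)$ satisfies the hypothesis of the second family of constraints, yielding $z(\sigma_k, \tau) \leq z(\sigma_{k+1}, \tau)$; iterating gives $z(y, \tau) = 1$, hence $y \in V_\tau$.

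The step requiring the most care is the chain argument, because the convexity constraint of (\ref{eq:MinOptMv}) is imposed only on triples with consecutive dimensions \emph{and} with the middle element strictly below $\tau$. One must therefore check that every intermediate $\sigma_k$ in the chain indeed lies strictly below $\tau$; this is automatic as long as $y < \tau$, since then $\sigma_k \leq y < \tau$ for all $k$, while the boundary case $y = \tau$ makes $y \in V_\tau$ trivially. Once convexity of every $V_\tau$ is established, $\mathcal{V}$ is a partition of $K$ into locally closed subsets and hence a combinatorial multivector field.
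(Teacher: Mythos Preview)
Your proposal is correct and follows essentially the same route as the paper: both invoke Lemma~\ref{lem:shavMultivec} to exhibit a feasible point, define $V_\tau$ as $\{\tau\}$ together with the non-toplexes $\sigma$ with $z(\sigma,\tau)=1$, read off the partition property from the equality constraints, and prove convexity by climbing a maximal chain of consecutive dimensions and applying the inequality constraints step by step. Your treatment is in fact slightly more careful than the paper's in isolating the boundary case $y=\tau$ and in making explicit that every intermediate $\sigma_k$ in the chain lies strictly below $\tau$, which is exactly what is needed for the constraint triple $(\sigma_k,\sigma_{k+1},\tau)$ to be present.
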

            \begin{proof}
                First, we need to show that the set of solutions of (\ref{eq:MinOptMv}) is non-empty. We apply the previous Lemma (\ref{lem:shavMultivec}) to obtain a combinatorial multivector field $ \mathcal{V} $. We assign $ z(\sigma, \tau) = 1 $, if $ \sigma $ and $ \tau $ are in the same multivector. Each multivector has a single toplex and each simplex is in a single multivector. Therefore, for all $ \sigma \in K \setminus T $, there exists a unique $ \tau$ such that $ z(\sigma, \tau) = 1 $ . This implies that the first set of constraint is satisfy. For the second set of constraints, we have $ z(\sigma_1, \tau) - z(\sigma_2, \tau) \leq 0 $ for a triplet $(\sigma_1, \sigma_2, \tau)$ with $ \sigma_1 < \sigma_2 < \tau $, and $ \dim \sigma_1 + 1 = \dim \sigma_2 $. The only way to not satisfy this condition is when $ z(\sigma_1, \tau) = 1 $, and $z(\sigma_2, \tau) = 0$. This implies that $ \sigma_1 $ and $ \sigma_2 $ will be in different multivector. This is impossible because the multivectors are convex. Therefore, for any simplicial complex, the set of solutions of (\ref{eq:MinOptMv}) is non-empty.
        
                Let $ \vec{z} $ be a solution from (\ref{eq:MinOptMv}). For each toplex $ \tau \in T $, we define a combinatorial multivector field with
                \begin{equation}    \label{eq:Model2MultVec}
                    V_\tau = \{ \sigma \in K \mid \sigma \leq \tau \text{ and } z(\sigma, \tau) = 1 \} \cup \tau.
                \end{equation}
                For each simplex $ \sigma \in K $, there exists a unique $ \tau $ such that $ z(\sigma, \tau) = 1 $ by the first constraint. We obtain $\sigma \in V_\tau $. For each toplex $ \tau $, we have $ \tau \in V_{\tau} $ by definition. Let $ V_{\tau_1} $ and $ V_{\tau_2} $ be two multivectors in $ \mathcal{V} $. Suppose there exists $ \sigma \in V_{\tau_1} \cap V_{\tau_2} $. If $ \sigma \not\in T $, then we have that $ z(\sigma, \tau_1) = z(\sigma, \tau_2) = 1$. This is impossible by the first set of constraints. By construction, we have that each toplex is in a different multivector. Therefore, $ V_{\tau_1} \cap V_{\tau_2} = \emptyset $ for all $ \tau_1, \tau_2 \in T$. We obtain that the set of $ V_{\tau} $, for $ \tau \in T $, define a partition.
        
                We need to show that each $ V_{\tau} $ is convex. Let $ \sigma_1, \sigma_3 \in V_{\tau} $ such that $ \sigma_1 < \sigma_3 $. This implies that $ z(\sigma_1, \tau) = 1 $ Let $ \sigma_2 \in K $ such that $ \sigma_1 \leq \sigma_2 \leq \sigma_3  $. We show by induction on dimension the simplex $ \sigma_2 $. Let $\dim \sigma_1 = d $, and $ \dim \sigma_2 = d + 1 $. We have the following constraint $ z(\sigma_1, \tau) - z(\sigma_2, \tau) \leq 0$ , because $ (\sigma_1, \sigma_2, \tau) $ is a triplet that satisfy the condition. We have that $ z(\sigma_1, \tau) = 1 $. Then, the only way to satisfy the constraint is $ z(\sigma_2, \tau) = 1 $. Then, $ \sigma_2 \in V_{\tau} $. If $ \dim \sigma_2 = d + i $ with $ i < 1 $, there exists a sequence of simplices $ \beta_j $ such that $\sigma_1 = \beta_1 < \beta_2 < \ldots < \beta_{n-1} < \beta_n = \sigma_2 $ with $ \dim \beta_j + 1 = \dim  $. By induction, we have that $ z(\beta_j, \tau) = 1 $ for all $j$, and we obtain $ \sigma_2 \in V_{\tau} $. Therefore, $ V_{\tau} $ is convex. Finally, the solution of (\ref{eq:MinOptMv}) induce a combinatorial multivector field.
            \end{proof}
        
            Now, we want to rewrite the problem (\ref{eq:MinOptMv}) into matrix notation. We define a matrix for each type of constraints. We assign an index $ j $ for each variable $ z(\sigma, \tau) $, and let $ m $ be the number of variables $z$. Let $ O_{n_1 \times m} $ be a matrix with binaries values such that $ n_1 = \vert K \setminus T \vert $. We assign an index $ i_1 $ for each $ \sigma \in K \setminus T $. Let $ \sigma_{i_1} $ be a simplex associated to the $i_1 $th row. The entries of $ O $ are :
            \begin{equation*}
                O_{i_1, j} := \begin{cases}
                    1 & \text{if } \sigma_{i_1} = \sigma \\
                        0 & \text{otherwise}
                \end{cases}.
            \end{equation*}
            
            Let $ M_{n_2 \times m} $ be a matrix with values in $ \{-1, 0 ,1 \} $ such that $ n_2 $ is the number of triplets $ (\sigma_1, \sigma_2, \tau ) $ defined in (\ref{eq:MinOptMv}). Let be a triplet $(\sigma_1, \sigma_2, \tau)$ associated to the $ i_2 $th row. The entries of $M$ are:
        \begin{equation*}
            M_{i_2, j} := \begin{cases}
                1 & z_j \text{ is associated to } (\sigma_1, \tau) \\
                -1 & z_j \text{ is associated to } (\sigma_2, \tau) \\
                0 & \text{otherwise}
            \end{cases}.
        \end{equation*}
    
        Finally, we can rewrite the linear problem (\ref{eq:MinOptMv}) into the matrix form :
        \begin{equation}\label{eq:MinOptMvMatrix}
            \begin{aligned}
    		      & \underset{\vec{z} \in \{0, 1\}^m }{\text{minimize}}
    		      & & f(z) = \vec{c} \cdot \vec{z} \\
    		      & \text{subject to}
                & & O\vec{z}  = \vec{\mathbf{1}}_{n_1} \\
                & & & M\vec{z}  \leq \vec{\mathbf{0}}_{n_2} 
        	\end{aligned}
        \end{equation}
        We note that each solution of (\ref{eq:MinOptMvMatrix}) induce a combinatorial multivector field $ \mathcal{V} $ where $ \vert T \vert = \vert \mathcal{V} \vert $.

%%%%%%%%%%%%%%%%%%%%%%%%%%%%%%%%%%%%%%%%%%%%%%%%%%
    
        We study the time complexity of solving the (\ref{eq:MinOptMvMatrix}).
        \begin{lem}\label{lemNbrVarEqMinTop}
            Let $m$ be the number of variables and $ n $ the number of equations of (\ref{eq:MinOptMvMatrix}). We suppose that each toplex of $ K $ have the same dimension $d > 0$.
            \begin{gather*}
                m = (2^{d}-2) \vert T \vert, \\
                n = (\vert K \vert - \vert T \vert) + (d+1)(2^d - 1) \vert T \vert.
            \end{gather*}
        \end{lem}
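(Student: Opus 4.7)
The plan is to prove both equalities by a direct counting argument, with $m$ and $n$ handled independently; each reduces to a sum over the toplexes of $K$ of a quantity determined only by the common dimension $d$.

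To count $m$, I would identify the variables $z(\sigma,\tau)$ of (\ref{eq:MinOptMvMatrix}) with pairs $(\sigma,\tau)$ in which $\tau\in T$ and $\sigma$ is a proper non-empty face of $\tau$. Since every toplex is a $d$-simplex, the number of proper non-empty faces equals the number of non-empty proper subsets of the vertex set of $\tau$ and depends only on $d$. Summing this constant over $|T|$ toplexes would yield the claimed closed form for $m$.

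For $n$, I would split the constraints into two disjoint families and count them separately. The equality family $\sum_{\sigma<\tau_i} z(\sigma,\tau_i)=1$ is indexed by $\sigma\in K\setminus T$ and contributes $|K|-|T|$. The inequality family $z(\sigma_i,\tau)-z(\sigma_j,\tau)\leq 0$ is indexed by triplets $(\sigma_i,\sigma_j,\tau)$ with $\sigma_i<\sigma_j$ a codimension-one face relation and $\sigma_j$ a face of $\tau\in T$. Fixing $\tau$ and parametrising by $k$ (the number of vertices of $\sigma_j$), there are $\binom{d+1}{k}$ choices for $\sigma_j$ and $k$ ways to drop a vertex to obtain $\sigma_i$, so the count per toplex is $\sum_k k\binom{d+1}{k}$ over the admissible range. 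I would then invoke the identity $k\binom{d+1}{k}=(d+1)\binom{d}{k-1}$ together with $\sum_{j=1}^{d}\binom{d}{j}=2^d-1$ to collapse this sum to $(d+1)(2^d-1)$, multiply by $|T|$, and add the equality contribution to obtain $n$.

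The main delicate point will be pinning down the range of $k$ in the inequality sum: strictly enforcing $\sigma_j<\tau$ as the text does would give $(2^d-2)$ instead of $(2^d-1)$, so matching the stated formula requires including the boundary case $\sigma_j=\tau$, most naturally by reading the corresponding $z(\tau,\tau)$ as the constant $1$ (so that its inequality is vacuous but still contributes a row to $M$). Once this convention is fixed, the binomial identity closes the argument with no further work.
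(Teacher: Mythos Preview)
Your counting strategy is exactly the paper's: enumerate proper faces per toplex for $m$, and split $n$ into the $|K|-|T|$ equality rows plus a per-toplex triplet count. The only substantive difference is that where you compute the inequality count directly via $k\binom{d+1}{k}=(d+1)\binom{d}{k-1}$ and a binomial sum, the paper instead identifies the triplets with admissible matchings in the sense of Forman inside a single $d$-simplex and imports the value $(d+1)(2^d-1)$ from an earlier reference. Your derivation is therefore more self-contained; the paper's is a one-line citation.

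Your observation about the range of $k$ is correct and in fact exposes a point the paper's proof does not confront: as written, the constraints require $\sigma_j<\tau$ strictly, and the direct count over $2\le k\le d$ gives $(d+1)(2^d-2)$, not $(d+1)(2^d-1)$. The paper arrives at the latter only because the Forman-matching count it cites allows the top pair $(\sigma_i,\sigma_j)$ with $\sigma_j=\tau$. So the discrepancy you flag is real; the paper sidesteps it by silently using the broader count rather than by the $z(\tau,\tau)=1$ convention you propose.
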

        \begin{proof}
            The number of simplices in a $d$-simplex is $2^d - 1$. For each simplex $ \sigma \in K \setminus T $, we match it to a $ \tau \in T $ such that $ \sigma < \tau $. We have $ (2^{d} - 2) \vert T \vert $ variables in (\ref{eq:MinOptMvMatrix}).
            
            We have two sets of constraint. For the matrix $ O $, we have $ \vert K \vert - \vert T \vert $. For the second matrix $ m $, we need to consider each triplet $ \sigma_i < \sigma_j < \tau $ with $ \dim \sigma_i + 1 = \dim \sigma_j $ and $ \tau \in T $. The number of equations for a fixed toplex is equal to the number of admissible matching in the sense of Forman. From \cite{arDDC_FiniteVecField}, we have $ (d+1)(2^d-1)$ for each toplex. Finally, $ n = (\vert K \vert - \vert T \vert) + (d+1)(2^d - 1) \vert T \vert $. 
        \end{proof}
    
        As said earlier, to solve the optimization of a linear problem with binary variables is Np-hard. Under some circumstances, the problem (\ref{eq:MinOptMvMatrix}) can be solved in polynomial time. In general, the method to solve an linear optimization problem with integer variables have two steps. First, we relax the variables to real values, and we solve the relaxed problem with classical method such as simplex method. The solution might have non-integer values. The second step is to transform the non-integer solution to an integer solution. The first step is consider to be solve in polynomial time in average\cite{boOptNum}. But, the second step is non-polynomial. We can sometimes skip the second step under some hypothesis. Per example, if the matrix of constraint is totally unimodular and the constant on the right have interger values, then the solution have integer values\cite{arHoffUniModMat}. In our problem, the constraint matrix of (\ref{eq:MinOptMvMatrix}) is not totally unimodular. But, with our experiments, we obtain each time an integer solution after the first step. Therefore, we state this conjecture.  
        \begin{conj}\label{conj:AlgoCmp}    
            Consider the objective function $ f(z) $ where all $ c_j $ are different. Then, the optimal solution of the relaxed problem of (\ref{eq:MinOptMvMatrix}) with positives real variables is the same has the optimal solution of (\ref{eq:MinOptMvMatrix}).
        \end{conj}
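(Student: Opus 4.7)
The plan is to show that under the distinctness hypothesis on the coefficients $c_j$, every optimal vertex of the LP relaxation of (\ref{eq:MinOptMvMatrix}) is integer-valued. Since a linear objective attains its minimum over a polytope at a vertex, and distinctness of the $c_j$ rules out any delicate tie between a fractional vertex and an equally-good integer vertex, showing that fractional vertices cannot be optimal suffices. Note that Lemma \ref{lem:shavMultivec} already guarantees an integer feasible point, so the LP minimum is finite.

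First I would analyze the polytope $P$ cut out by $O\vec z = \vec 1_{n_1}$, $M \vec z \leq \vec 0_{n_2}$, and $0 \leq z \leq 1$. A vertex $z^*$ is uniquely determined by an active subsystem of rank $m$; my aim is to describe its combinatorial type as an \emph{assignment} of each $\sigma \in K \setminus T$ to toplexes in its coface star, with possibly fractional splits constrained to be monotone along chains by $M$. Concretely, if $0 < z^*(\sigma_0,\tau_0) < 1$, the equality at $\sigma_0$ forces a second fractional entry $z^*(\sigma_0,\tau_0')$; the monotonicity constraints then propagate fractional values to every $\sigma'$ with $\sigma_0 < \sigma' < \tau_0$ and $\sigma_0 < \sigma' < \tau_0'$. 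Tracking this propagation should exhibit a bipartite \emph{fractional exchange} structure between the faces of $\tau_0$ and $\tau_0'$ lying above the common face $\sigma_0$.

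Second, from such a fractional piece I would construct two feasible integer directions $d^+$ and $d^-$ supported on the exchange structure so that $z^* \pm \varepsilon d^\pm \in P$ for small $\varepsilon > 0$. Because the support of $d^+ - d^-$ is nonempty and the cost vector has all distinct entries, the quantity $\vec c \cdot d^+ - \vec c \cdot d^-$ is a signed sum of pairwise distinct $c_j$, which can be argued to be nonzero generically. Hence one of the two moves strictly decreases $f(\vec z)$, contradicting the optimality of $z^*$. In other words, the proof reduces to an augmenting-cycle lemma on the bipartite graph whose edges encode fractional splits at pairs $(\sigma, \tau) \neq (\sigma,\tau')$ assigned by $z^*$.

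The hard part, and the reason the statement is posed as a conjecture, lies in propagating the fractional splits through long chains of the face poset when $\dim \tau > 2$. A local two-way exchange may be blocked by a higher-dimensional face whose monotonicity constraint is tight on only one side, so the augmenting structure must be coordinated across every dimension simultaneously and across toplexes sharing common faces. A plausible refinement is to couple this structural analysis with a lex-perturbed simplex argument: seed the method with the integer basic feasible solution given by Lemma \ref{lem:shavMultivec} and prove that every pivot preserves integrality under distinct $c_j$. Verifying that no pivot ever leaves the integer lattice, for arbitrary simplicial complexes, is what I expect to be the genuine obstacle.
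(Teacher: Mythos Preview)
The paper states this as a \emph{conjecture} and offers no proof; it only reports empirical evidence (``with our experiments, we obtain each time an integer solution after the first step''). So there is no argument in the paper to compare your proposal against.

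Evaluating your proposal on its own terms, the central step does not go through. You write that $\vec c \cdot d^+ - \vec c \cdot d^-$ is ``a signed sum of pairwise distinct $c_j$, which can be argued to be nonzero generically.'' But the hypothesis is only that the $c_j$ are pairwise distinct, not that they are generic in any stronger sense. A signed sum of distinct reals can easily vanish: take $c_1=1$, $c_2=2$, $c_3=3$ and the sum $c_1+c_2-c_3=0$. So distinctness of the cost coefficients does not, by itself, block a fractional vertex from being optimal via your exchange argument. You would need either a much stronger genericity hypothesis (which is not what the conjecture asserts) or a structural reason, specific to the constraint matrices $O$ and $M$, that the relevant signed sums are always supported on a single coordinate or otherwise forced to be nonzero. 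Your sketch does not supply this.

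You correctly identify that the propagation of fractional splits through chains of the face poset is where the real difficulty lies, and your final paragraph is honest about this obstacle. But as written, the proposal is an outline of where a proof \emph{might} live rather than a proof, and the one concrete inference you draw from the distinctness hypothesis is invalid. The status of the statement remains exactly what the paper says: open.
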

        
        We need to have some hypothesis on the cost of $ f(z) $. Because, if all the costs are the same, we can find an optimal solution with non-integer values.  

\section{Complete Pipeline and interpretation}
The optimization problem in section 3 assumes that we have a simplicial complex with a vector at each simplex of our simplicial complex. The goal is to create a multivector field and to try to understand what could be the dynamics in the region where we want to study the cloud of vector.
Here we present the key steps and their importance in the study.

 \subsection{Construction of a simplicial complex on data}
 The dataset has two components, an initial point $ x \in \mathbb{R}^n $ and a direction $ \dot{x} \in \mathbb{R}^n$ associated to $x$.
We need first to decide where we want to study the system. Raw data in general are not well aligned and are not really easy to study, we need to clean them first and then decide a relevant region where we can study these data. We call that region the region of interest(ROI). When we have our ROI, we now have to choose how to construct the simplicial complex with those cloud of vectors such that the minimization problem gives us at least a solution with a nice interpretation and also we try to reduce the number of size of the simplicial complex. Methods for the construction include the use of Dowker complexes and/or k-mean clustering (see \cite[Setion 3]{arDDC_FiniteVecField}).

\subsection{Multivector field construction}

When we have our simplicial complex with the value of vectors $ V(\sigma) $ for each simplices, we now need to construct a multivector field. We have two models and each of them presents some advantages and also some limitations.
\subsubsection{One toplex per multivector's model}
This model provides a simple an efficient approach for the construction of combinatorial multivector fields. Here are some advantages:
\begin{itemize}
    \item \textbf{Simplify variables and constraints:} By assigning one toplex per multivector, it reduces the number of variables and constraints, making the computations faster.
    \item \textbf{Guarantees convexity:} The constraints are designed in a way that at the end of the computation a multivector field is obtained.
\end{itemize}
However this approach has also some limitations:
\begin{itemize}
    \item \textbf{Hard to generalize:} This model restriction limits the range of possibility for the construction of a combinatorial multivector field, making it less flexible.
    \item \textbf{Critical multivectors:} Some complex invariant sets cannot be represented using this model, so it creates sometimes inside those invariant sets artifacts that are considered as critical multivectors even when they should not exist in the invariant set.
\end{itemize}
In general this model is for applications where the priority is to quickly understand the general behavior of the system and may not work well for the case of complex dynamical systems.

\subsubsection{Generalization of the CDS Model}

This model extend the previous one allowing more flexibility and control over the construction of multivector fields. Its main advantages are:
\begin{itemize}
    \item \textbf{Broader applicability:} This model can construct a more flexible type of combinatorial multivector field, making it fit more complex dynamics.
    \item \textbf{Richer dynamical representations:} Since the model is more flexible, it is possible to capture much more complex invariant sets and study more complex systems.
\end{itemize}
Unfortunately, this model at the moment brings a lot of challenges:
\begin{itemize}
    \item \textbf{Convexity issues:} The resulting solutions of the model are not always multivector fields, because the convexity requirement for multivectors may not be satisfied. it requires some post processing. Propagation method (see\cite{Woukeng_2024}) may be used in this case as a post processing method.
    \item \textbf{Complexity of the model:} The need for parameter tuning and additional constraints increase the computational cost in general making it slower than the previous model.
    \item \textbf{NP-hard optimization:} The optimization problem involves binary variables, which significantly increases the difficulty of the computation.
\end{itemize}
    This model is then a better fit when you want to study a more complex systems but will requires a higher computational cost and some post processing methods.

\subsection{Morse decomposition and interpretations}
By constructing multivector fields, we are interested in the dynamics generated by the study of the induced combinatorial dynamical system. Having a Morse set with the right Conley index gives us some information about the possible nature of the combinatorial solutions, and from them we can deduce some possible behaviors that the data can exhibit.\\
Here are some invariants sets, their Conley indices and what they could probably be:
\subsubsection{Periodic orbits}

For the case of periodic orbits in general from \cite{mrozek2022combinatorial} we have this formula, and that should work in any dimension.
If $\cA$ represents a connected isolated invariant set, that is a Morse set from the Morse decomposition such that for $r = 0$ or $r = 1$, we have
\begin{align*}
    \dim H_{2n+r}(\cl \cA, \mo \cA) = \dim H_{2n + 1 + r} (\cl \cA, \mo \cA)~~~~~
\text{for all }
n \in \ZZ ,
\end{align*}
where not all of these homology groups are trivial. Then, the set $\cA$  may be the Morse set of a non-trivial periodic orbit.\\
   
\subsubsection{Attracting fixed point or Attractors}        

For the case of an attracting fixed point and some attractors in general from \cite{forman1998combinatorial} we have this formula, and that should work in any dimension.
If $\cA$ represents a connected isolated invariant set, that is a Morse set from the Morse decomposition, we have
\begin{align*}
    \dim H_{n}(\cl \cA, \mo \cA) &=  1
\text{ if n=0, }\\
0 & \text{ otherwise}
\end{align*}  

\subsubsection{Repelling fixed point or Repellers}        

For the case of a repelling fixed point and some attractors in general from \cite{forman1998combinatorial} we have this formula, and that should work in any dimension. If $d$ is the dimension of the space and $\cA$ represents a connected isolated invariant set, that is a Morse set from the Morse decomposition, we have
\begin{align*}
    \dim H_{n}(\cl \cA, \mo \cA) &=  1
\text{ if n=d, }\\
0 & \text{ otherwise}
\end{align*}

\section{Experimentation}
    \subsection{Vanderpol}
        We consider the following system :
        \begin{equation}    \label{eq:Vanderpol}
            \begin{cases}
                \frac{dx}{dt} =  y \\
                \frac{dy}{dt} = y(1-x^2) - x 
            \end{cases}
        \end{equation}
        This dynamical system have a repulsive fixed point at $ (0, 0) $ and an attractive orbit around $ (0, 0) $. We start with $ 1000 $ data points randomly taken in $ [-7, 7] \times [-7, 7] $ and each point has a vector computed by (\ref{eq:Vanderpol}). We apply the k-means method to obtain $ 50 $ clusters, and we build the Delaunay complex $K$ on the set of clusters. We compute to value of $ V(\sigma) $ in the same way as subsection \ref{ss:OverviewMain}.

        We solve the minimization problem of (\ref{eq:MinOptMvMatrix}), and the optimal solution induce the combinatorial multivector field $\mathcal{V}$ at Figure (\ref{fig:cmvfVanderpol}). We have a single critical multivector denoted by $86$. This multivector is repulsif. We have a single cycle $ S $ with ten multivectors in the third Figure of (\ref{fig:cmvfVanderpol}). The exit set of $ S $ is empty. We retrieve that $ S $ has a similar dynamic has an attractive orbit.
        
        %%%%%%%%%%%%%%%%%%%%%%%%%%%%%%%%%%%%%%%%%%%
         \begin{figure}
  	         \center
             \includegraphics[height=4cm, width=6cm, scale=1.00, angle=0 ]{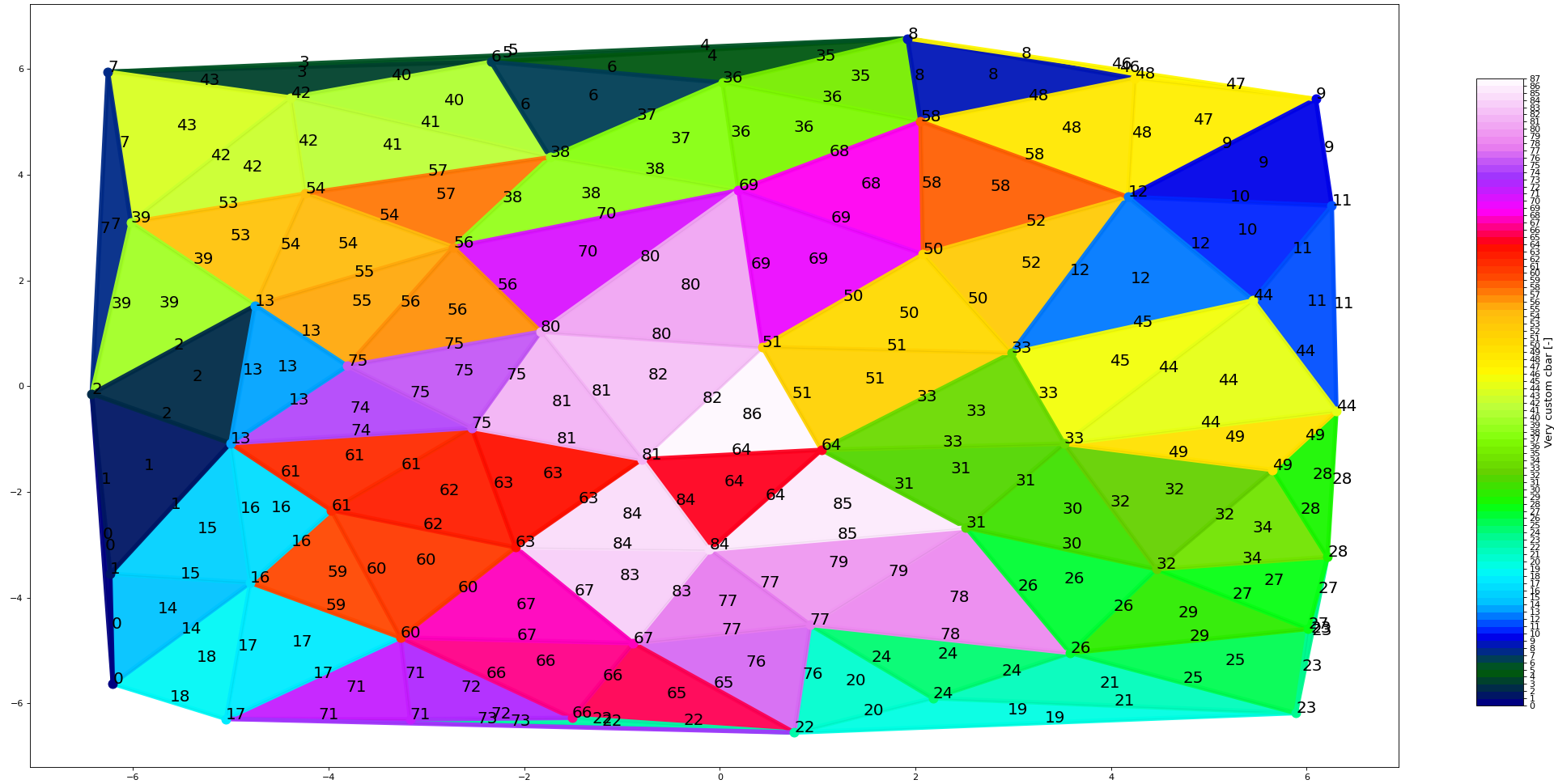}
             \includegraphics[height=4cm, width=6cm, scale=1.00, angle=0 ]{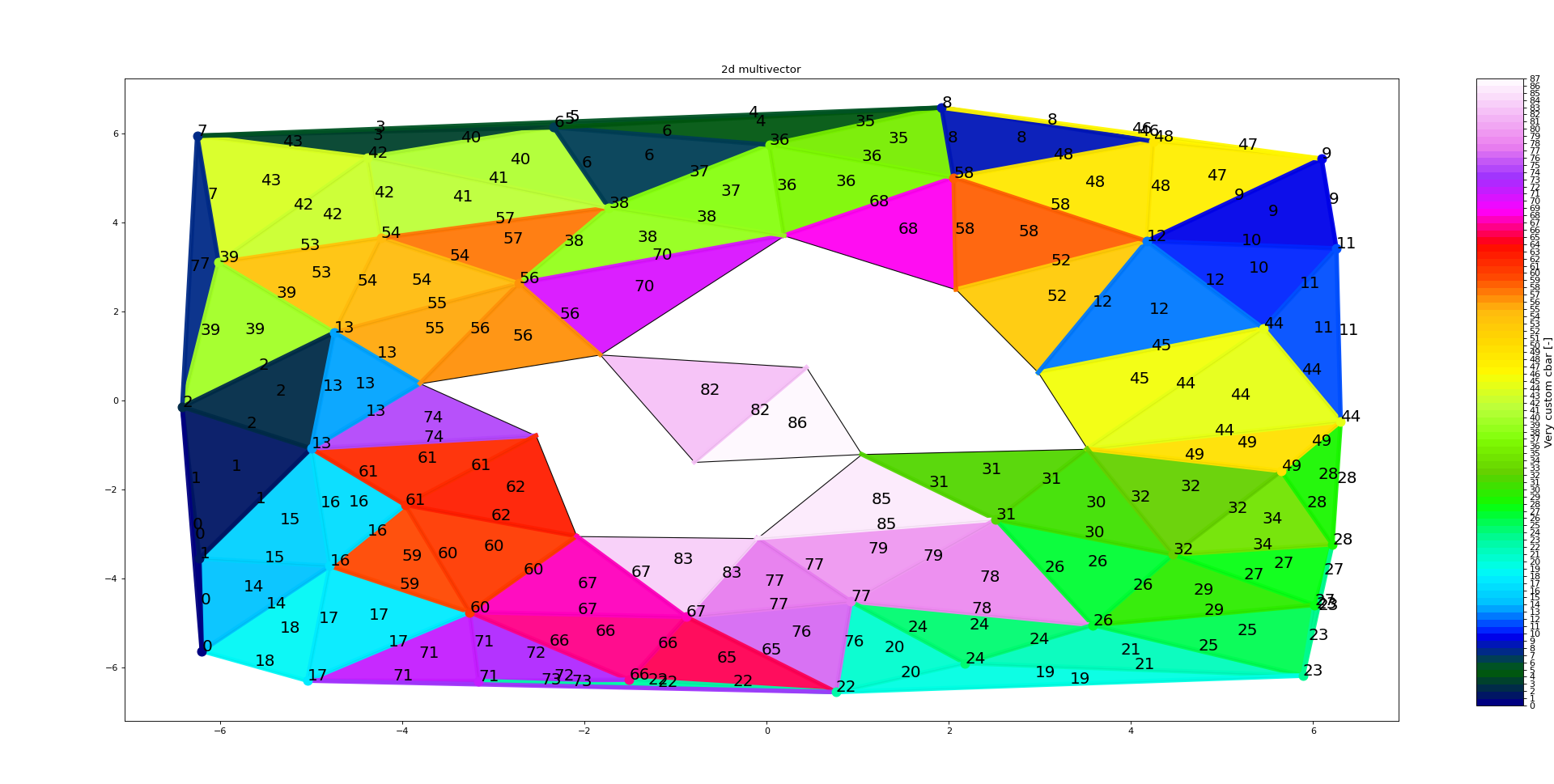}
             \includegraphics[height=4cm, width=6cm, scale=1.00, angle=0 ]{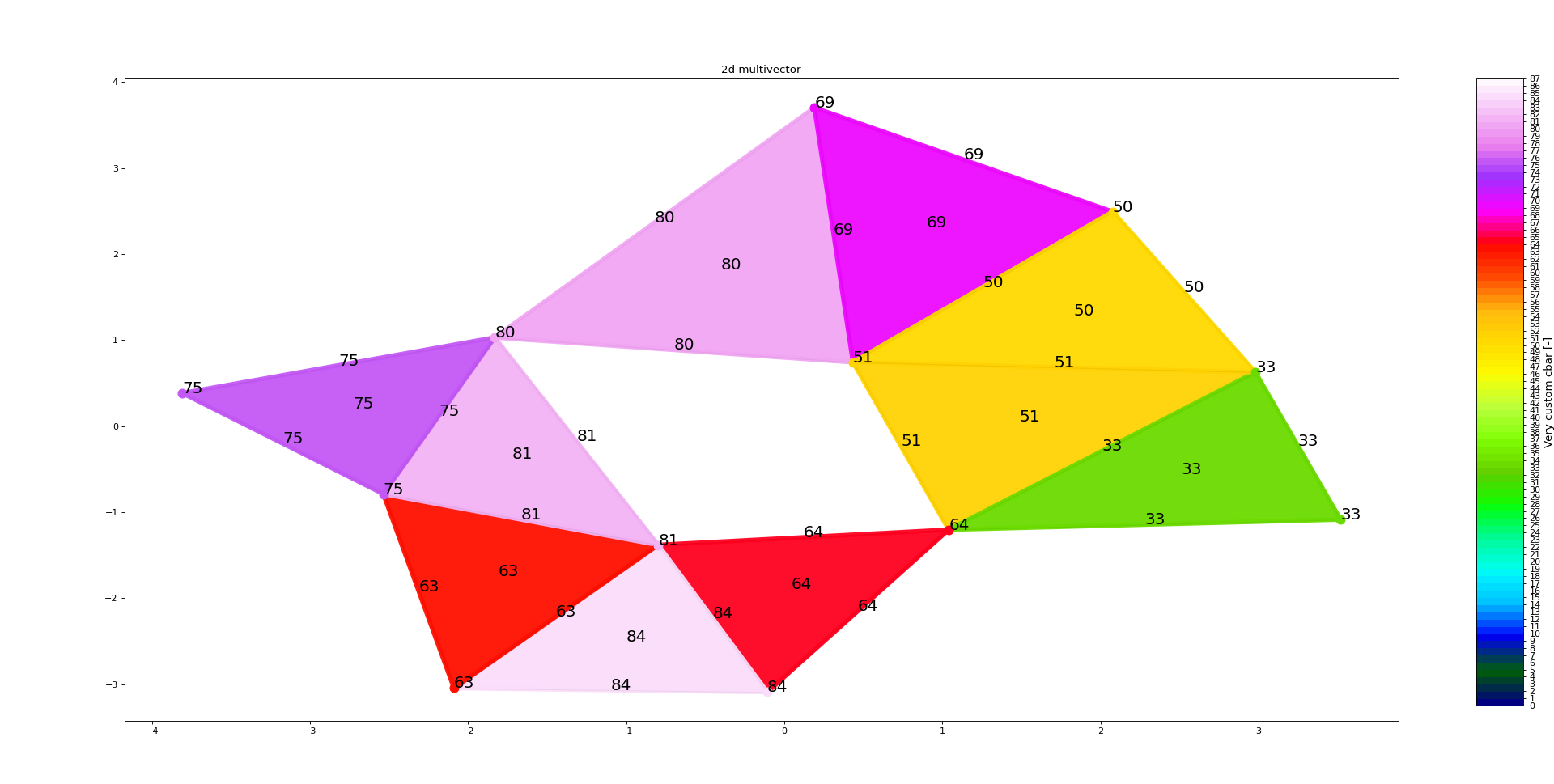}           
             \caption{The first Figure is the combinatorial multivector field  $\mathcal{V}$ obtained from the Vanderpol oscillator's equation. The second Figure is the gradient part of $\mathcal{V}$. The third Figure is the only strongly connected components of $ \mathcal{V} $.}	
             \label{fig:cmvfVanderpol}
     \end{figure}
        %%%%%%%%%%%%%%%%%%%%%%%%%%%%%%%%%%%%%%%%%%%%
        
    \subsection{Lorenz Attractors}
        We consider the following system :
        \begin{equation}
            \begin{cases}
                \frac{dx}{dt} = 10(y-x) \\
                \frac{dy}{dt} = 28x-xz-y \\
                \frac{dz}{dt} = xy - \frac{8}{3}z            
            \end{cases}
        \end{equation}
        This dynamical system exhibits a strange attractor. We want to be able to retrieve this kind of information. 
        For the datasets, we take a linear approximation of a trajectory with the initial point $ x_0 = (0.00, 1.00, 1.05) $ and $ \Delta t = 0.2 $ :
        \begin{equation*}
            x_{i+1} = x_i + \Delta t \dot{x_i} \quad i = 0,1,2 \ldots 999.
        \end{equation*}
        To reduce the number of points we apply k-means clustering to reduce to $ 75 $ points and we apply the same procedure as the previous subsection. The optimal solution of (\ref{eq:MinOptMvMatrix}) induce a combinatorial multivector field with $ 380 $ multivectors. The number of critical multivectors is $ 21 $ where $ 8 $ of them has a single toplex. We obtain a single strongly connected component $S$ with $ 351 $ multivectors and contains ten critical multivectors. We have that $ \frac{351}{380} \sim 92.37 \% $ of multivectors are in $S$. We have that the exit set of $ S $ is empty, and this implies that $ S $ has dynamics similar to an attractor. 

        %%%%%%%%%%%%%%%%%%%%%%%%%%%%%%%%%%%%%%%%
       \begin{figure}
  	         \center
             \includegraphics[height=5cm, width=6cm, scale=1.00, angle=0 ]{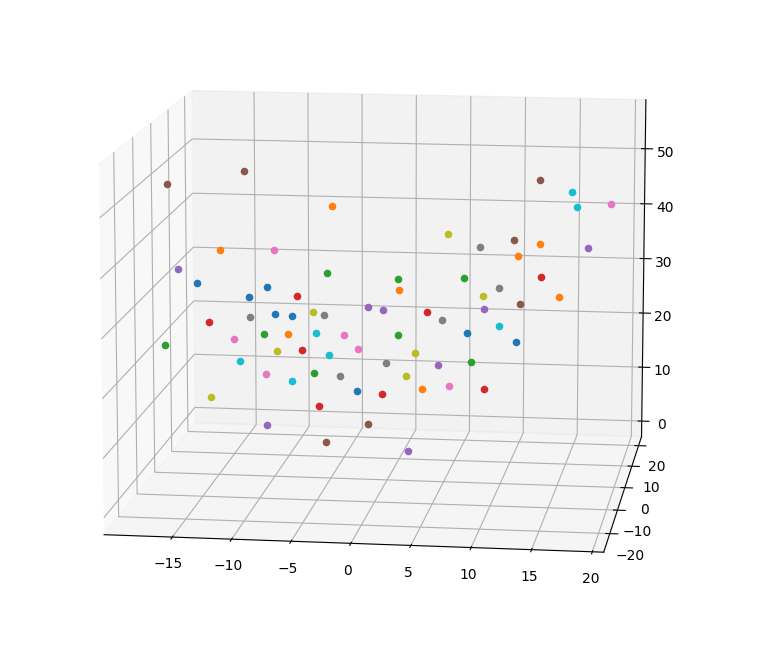}
             \includegraphics[height=5cm, width=6cm, scale=1.00, angle=0 ]{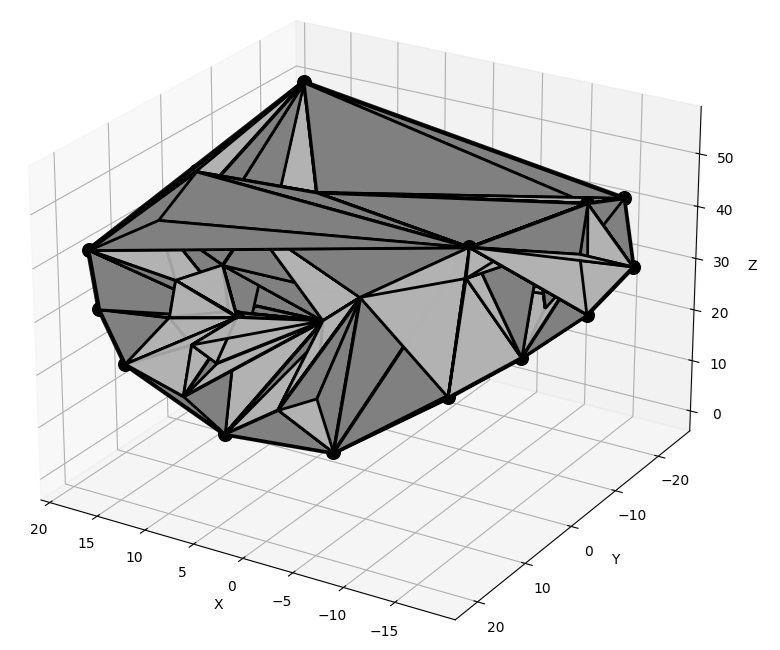}          
             \caption{On the left, we have the set of cluster and on the right, we have the strongly connected component.}	
             \label{fig:expLorenzAtt}
     \end{figure}
        %%%%%%%%%%%%%%%%%%%%%%%%%%%%%%%%%%%%%%%%
    
\section*{Competing interests}
   Research of D.W. is partially supported by the Polish National Science Center under Opus Grant No. 2019/35/B/ST1/00874.\\
   Research of D.D.C is supported by Mathematics Department of Université de Sherbrooke.\\
   The authors have no conflict of interest to declare that are relevant to the content of this article.

\newpage
\bibliographystyle{plain}
\bibliography{references,references_DDC}

\begin{thebibliography}{10}

\bibitem{Alexandroff_ftop}
P.~Alexandroff.
\newblock Diskrete {R{\"a}ume}.
\newblock {\em Rec. Math. Moscou, n. Ser.}, 2:501--519, 1937.

\bibitem{arDDC_FiniteVecField}
Dominic Desjardins~C{\^o}t{\'e}.
\newblock From finite vector field data to combinatorial dynamical systems in the sense of {F}orman.
\newblock {\em J. Appl. Comput. Topol.}, 8, 2024.

\bibitem{En1989}
R.~Engelking.
\newblock {\em General Topology}.
\newblock Heldermann Verlag, Berlin, 1989.

\bibitem{forman1998combinatorial}
Robin Forman.
\newblock Combinatorial vector fields and dynamical systems.
\newblock {\em Mathematische Zeitschrift}, 228(4):629--681, 1998.

\bibitem{arHoffUniModMat}
Alan~J. Hoffman.
\newblock Total unimodularity and combinatorial theorems.
\newblock {\em Linear Algebra and its Applications}, 13:103--108, 1976.

\bibitem{lipinski_phd}
Micha\l{} Lipi\'nski.
\newblock {\em Morse-{C}onley-{F}orman theory for generalized combinatorial multivector fields on finite topological spaces}.
\newblock PhD thesis, Jagiellonian University, 2021.

\bibitem{lipinski2019conley}
Micha{\l} Lipi{\'n}ski, Jacek Kubica, Marian Mrozek, and Thomas Wanner.
\newblock Conley-{M}orse-{F}orman theory for generalized combinatorial multivector fields on finite topological spaces.
\newblock {\em Journal of Applied and Computational Topology}, 2022.

\bibitem{mrozek2022combinatorial}
Marian Mrozek, Roman Srzednicki, Justin Thorpe, and Thomas Wanner.
\newblock Combinatorial vs. classical dynamics: Recurrence.
\newblock {\em Communications in Nonlinear Science and Numerical Simulation}, 108:106--226, 2022.

\bibitem{boOptNum}
Jorge Nocedal and Stephen~J. Wright.
\newblock {\em Numerical Optimization}.
\newblock Springer, New York, 2006.

\bibitem{Woukeng_2024}
Donald Woukeng, Damian Sadowski, Jakub Leśkiewicz, Michał Lipiński, and Tomasz Kapela.
\newblock Rigorous computation in dynamics based on topological methods for multivector fields.
\newblock {\em Journal of Applied and Computational Topology}, 8(4):875–908, 2024.

\end{thebibliography}

\end{document}